\newtheorem{theorem}{Theorem}[section]
\newtheorem{lemma}[theorem]{Lemma}
\newtheorem{proposition}[theorem]{Proposition}
\newtheorem{corollary}[theorem]{Corollary}
\theoremstyle{definition}
\newtheorem{definition}[theorem]{Definition}
\newtheorem{example}[theorem]{Example}
\newtheorem{remark}[theorem]{Remark}
\numberwithin{equation}{section}
\newcommand{\blankbox}[2]
\begin{document}
\title{Simplicity of quadratic Lie conformal algebras}

\author{Yanyong Hong}
\address{College of Science, Zhejiang Agriculture and Forestry University,
Hangzhou, 311300, P.R.China}
\email{hongyanyong2008@yahoo.com}
\author{Zhixiang Wu}
\address{Department of Mathematics, Zhejiang University,
Hangzhou, 310027, P.R.China}
\email{wzx@zju.edu.cn}

\subjclass[2010]{17B20, 17B65, 17B68, 17B69}
\keywords{Lie conformal algebra, Gel'fand-Dorfman bialgebra, Novikov algebra, infinite-dimensional Lie algebra, Novikov-Jordan algebra}
\thanks{Project supported by the National Natural Science Foundation of China (No. 11171296 and No.11401533), the Zhejiang Provincial Natural Science Foundation of China (No. LZ14A010001) and the Scientific Research Foundation of Zhejiang Agriculture and Forestry University (No.2013FR081)}
\begin{abstract}
In this paper, simplicity of quadratic Lie conformal algebras are investigated. From the point view of the corresponding Gel'fand-Dorfman bialgebras, some sufficient conditions and necessary conditions to ensure simplicity of quadratic Lie conformal algebras are presented. By these
observations, we present several classes of new infinite simple Lie conformal algebras. These results will be useful for classification purposes.
\end{abstract}

\maketitle

%\footnote{The second author is the corresponding author.}%
\section{Introduction}
The vertex algebra introduced by R.Borcherds
in \cite{Bo} is a rigorous mathematical concept of the
chiral part of a 2-dimensional quantum field theory. It has been studied
intensively by physicists since the landmark paper \cite{BPZ} appeared. The notation of Lie
conformal algebra was formulated by Kac in \cite{K1,K2}. It is an
axiomatic description of the operator product expansion (or rather
its Fourier transform) of chiral fields in conformal field theory.
Lie conformal algebras play important roles in quantum field theory, vertex algebras and infinite-dimensional Lie algebras satisfying the locality property in \cite{K}.
Moreover, Lie conformal algebras have close connections to Hamiltonian formalism in the theory of nonlinear evolution equations (see the book \cite{Do} and references therein, and also \cite{BDK, GD, Z, X4} and many other papers).

By now, there are only two classes of Lie conformal algebras, namely, the Lie conformal algebra $gc_N$ and its infinite subalgebras, and quadratic Lie conformal algebras named by Xu in \cite{X1}. A quadratic Lie conformal algebra corresponds to a Hamiltonian pair in \cite{GD}, which plays fundamental roles in completely integrable systems. Moreover, it is completely determined by a Gel'fand-Dorfman bialgebra \cite{X1}. Central extensions and conformal derivations of quadratic Lie conformal algebras are studied by us in \cite{H} in terms of Gel'fand-Dorfman bialgebras.

A Lie conformal algebra  is said to be finite if it is a finitely generated  $\mathbb{C}[\partial]$-module. Otherwise, it is called an infinite Lie conformal algebra.
The structure theory, representation theory and cohomology theory of finite Lie conformal algebras have been developed in recent years (e.g.,\cite{DK1,CK1,CK2,BKV}). A complete classification of finite simple (or semisimple) Lie conformal algebras is given in \cite{DK1},
all irreducible finite conformal modules of finite simple (or semisimple) Lie conformal algebras are classified in \cite{CK1, CK2}
and cohomology groups of finite simple Lie conformal algebras with some conformal modules
are characterized in \cite{BKV}. However, there is a little progression on the study of infinite Lie conformal algebras
as far as we know.  The intensive studying infinite Lie conformal algebra is the general Lie conformal algebra $gc_N$ \cite{BKL1,BKL2,DK2,S,SY1}.
$gc_N$ plays the same important role in the theory of Lie conformal algebras as the general Lie algebras $gl_N$ does in the theory of Lie algebras.
In addition, some infinite Lie conformal algebras obtained from some known formal distribution Lie algebras are studied (e.g.,\cite{FCH,WCY}).

The fundamental question in the studying infinite Lie conformal algebras is to classify the infinite simple Lie conformal algebras. To solve this question, we must construct some infinite simple Lie conformal algebras. This is the motivation of this paper. First, we study simplicity of quadratic Lie conformal algebras through the corresponding Gel'fand-Dorfman bialgebras.  Some sufficient conditions and necessary conditions to ensure simplicity
of quadratic Lie conformal algebras are presented. Then we construct several classes of new infinite simple Lie conformal algebras by using Gel'fand-Dorfman bialgebras. This will enrich the theory of infinite Lie conformal algebras and will be useful for classification purposes.

This paper is organized as follows. In Section 2, the definitions of Lie conformal algebra and quadratic Lie conformal algebra are recalled.
In Section 3, some sufficient conditions and necessary conditions to ensure simplicity of quadratic Lie conformal algebras are presented and we give a strategy to find infinite simple Lie conformal algebras. In Section 4, several classes of new infinite simple Lie conformal algebras are constructed.

Throughout this paper, denote by $\mathbb{C}$ the field of complex
numbers; $\mathbb{C}^+$ is the additive group of $\mathbb{C}$; $\mathbb{N}$ is the set of natural numbers, i.e.
$\mathbb{N}=\{0, 1, 2,\cdots\}$; $\mathbb{Z}$ is the set of integer numbers; $\mathbb{Z}_+$ is the set of positive integer numbers.

\section{Preliminaries}
In this section, some results about Lie conformal algebras and quadratic Lie conformal algebras are recalled and we refer to \cite{K1} and \cite{X1}.
\begin{definition}
A \emph{Lie conformal algebra} $R$ is a $\mathbb{C}[\partial]$-module with a $\lambda$-bracket $[\cdot_\lambda \cdot]$ which defines a $\mathbb{C}$-bilinear
map from $R\otimes R\rightarrow R[\lambda]$, where $R[\lambda]= R\otimes\mathbb{C}[\lambda]$ is the space of polynomials of $\lambda$ with coefficients
in $R$, satisfying
\begin{eqnarray*}
&&[\partial a_\lambda b]=-\lambda [a_\lambda b],~~~[a_\lambda \partial b]=(\lambda+\partial)[a_\lambda b], ~~\text{(conformal sesquilinearity)}\\
&&[a_\lambda b]=-[b_{-\lambda-\partial}a],~~~~\text{(skew-symmetry)}\\
&&[a_\lambda[b_\mu c]]=[[a_\lambda b]_{\lambda+\mu} c]+[b_\mu[a_\lambda c]],~~~~~~\text{(Jacobi identity)}
\end{eqnarray*}
for $a$, $b$, $c\in R$.
\end{definition}

If a Lie conformal algebra $R$ is a finitely generated
$\mathbb{C}[\partial]$-module, it is called a \emph{finite} Lie conformal algebra; otherwise, it is said to be  \emph{infinite}.

 Suppose $A$ and $B$ are subspaces of a Lie conformal algebra $R$. Using the notations in \cite{D}, we define $[A,B]$ as the $\mathbb{C}$-linear span of all $\lambda$-coefficients in the products $[a_\lambda b]$, where $a\in A$, $b\in B$. Notice that if $A$ and $B$ are both $\mathbb{C}[\partial]$-submodules, then $[A, B]=[B, A]$ by skew symmetry. An \emph{ideal} of a Lie conformal algebra $R$ is a $\mathbb{C}[\partial]$-submodule $I\subset R$ such that $[R,I]\subset I$. Every nonzero Lie conformal algebra has two ideals, namely $0$ and itself. These two ideals are called \emph{trivial}. $R$ is \emph{abelian} if $[R,R]=0$. A Lie conformal algebra $R$ is \emph{simple} if its only ideals are trivial and it is not abelian.

Moreover, there is an important infinite-dimensional Lie algebra associated with a Lie conformal algebra.
Assume that $R$ is a Lie conformal algebra and set $[a_\lambda b]=\sum\limits_{n\in \mathbb{N}}\frac{\lambda^n}{n!}a_{(n)}b$. Let Coeff$(R)$ be the quotient
of the vector space with basis $a_n$ $(a\in R, n\in\mathbb{Z})$ by
the subspace spanned over $\mathbb{C}$ by
elements:
$$(\alpha a)_n-\alpha a_n,~~(a+b)_n-a_n-b_n,~~(\partial
a)_n+na_{n-1},~~~\text{where}~~a,~~b\in R,~~\alpha\in \mathbb{C},~~n\in
\mathbb{Z}.$$ The operation on Coeff$(R)$ is given as follows:
\begin{equation}\label{106}
[a_m, b_n]=\sum_{j\in \mathbf{N}}\left(\begin{array}{ccc}
m\\j\end{array}\right)(a_{(j)}b)_{m+n-j}.\end{equation} Then,
Coeff$(R)$ is a Lie algebra and it is called the\emph{ coefficient algebra} of $R$ (see \cite{K1}).

Next, we introduce some examples of Lie conformal algebras.

\begin{example}

Let $\mathfrak{g}$ be a Lie algebra. The current Lie conformal
algebra associated to $\mathfrak{g}$ is defined by:
$$\text{Cur} \mathfrak{g}=\mathbb{C}[\partial]\otimes \mathfrak{g}, ~~[a_\lambda b]=[a,b],
~~a,b\in \mathfrak{g}.$$
\end{example}

\begin{example}
The Virasoro Lie conformal algebra $\text{Vir}$ is the simplest nontrivial
example of Lie conformal algebras. It is defined by
$$\text{Vir}=\mathbb{C}[\partial]L, ~~[L_\lambda L]=(\partial+2\lambda)L.$$
Coeff$\text{(Vir)}$ is just the Witt algebra.
\end{example}

It is shown in \cite{DK1} that any finite simple
Lie conformal algebra is isomorphic to either $\text{Vir}$ or $\text{Cur} \mathfrak{g}$, where $\mathfrak{g}$ is a
finite-dimensional simple Lie algebra.

\begin{definition}
 If $R=\mathbb{C}[\partial]V$ is a Lie conformal algebra as a free
$\mathbb{C}[\partial]$-module and the $\lambda$-bracket is of the following form:
\begin{eqnarray*}
[a_{\lambda} b]=\partial u+\lambda v+ w,~~~~~~\text{$a$, $b\in V$,}
\end{eqnarray*}
where $u$, $v$, $w\in V$, then $R$ is called a \emph{quadratic Lie conformal algebra}.
\end{definition}

Obviously, $\text{Vir}$ and $\text{Cur}\mathfrak{g}$ for a Lie algebra $\mathfrak{g}$ are quadratic Lie conformal algebras.

\begin{definition}
A \emph{Novikov algebra} $V$ is a vector space over $\mathbb{C}$ with a bilinear product $\circ: V\times V\rightarrow V$ satisfying (for any $a$, $b$, $c\in V$):
\begin{eqnarray}
&&(a\circ b)\circ c-a \circ (b\circ c)=(b\circ a)\circ c-b \circ (a\circ c),\\
&&(a\circ b)\circ c=(a\circ c)\circ b.
\end{eqnarray}
\end{definition}

If $I$ is a subspace of a Novikov algebra $(V,\circ)$ and $V\circ I\subset I$, $I\circ V\subset I$, then $I$ is called an \emph{ideal} of $(V,\circ)$. Obviously, $0$ and $V$ are ideals of $(V,\circ)$, which are called \emph{trivial}. $(V,\circ)$ is called \emph{simple}, if
$(V,\circ)$ has only trivial ideals and $a\circ b\neq 0$ for some $a$, $b\in V$.

\begin{remark}
Novikov algebra was essentially stated in \cite{GD}. It corresponds to a
certain Hamiltonian operator and also appeared
in \cite{BN} from the point of view of Poisson structures of
hydrodynamic type. The name ``Novikov algebra" was given by Osborn
in \cite{Os}.
\end{remark}

\begin{definition}(see \cite{GD} or \cite{X1})
A \emph{Gel'fand-Dorfman bialgebra} $V$ is a Lie algebra $(V,[\cdot,\cdot])$ with a binary operation $\circ$ such that $(V,\circ)$ forms a Novikov algebra and the following compatibility condition holds:
\begin{eqnarray}\label{eqq3}
[a\circ b, c]-[a\circ c, b]+[a,b]\circ c-[a,c]\circ b-a\circ [b,c]=0,
\end{eqnarray}
for $a$, $b$, and $c\in V$. We usually denote it by $(V,\circ,[\cdot,\cdot])$.
\end{definition}
Obviously, every Lie algebra $L$ is a Gel'fand-Dorfman bialgebra with the trivial Novikov algebra structure, namely $a\circ b=0$ for any $a,b\in L$. Similarly,
any Novikov algebra $N$ is a Gel'fand-Dorfman bialgebra with the trivial Lie bracket $[a,b]=0$ for any $a,b\in N$.

Moreover, there is a natural construction of Gel'fand-Dorfman bialgebras from Novikov algebras.

\begin{proposition}\label{pp2}
Let $(V,\circ)$ be a Novikov algebra. Define a Lie bracket $[\cdot,\cdot]^-_k$ on $V$ as follows:
\begin{eqnarray}
[a,b]^-_k=k(a\circ b-b\circ a), \text{for any $a$, $b\in V$, $k\in \mathbb{C}$.}
\end{eqnarray}
Then, $(V,\circ,[\cdot,\cdot]^-_k)$ forms a Gel'fand-Dorfman bialgebra.
\end{proposition}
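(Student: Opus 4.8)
The plan is to verify the three conditions in the definition of a Gel'fand--Dorfman bialgebra: that $(V,\circ)$ is a Novikov algebra, which holds by hypothesis; that $[\cdot,\cdot]^-_k$ is a Lie bracket; and that the compatibility condition holds. Since each of the identities we must check is homogeneous in the scalar $k$, it suffices to treat the case $k=1$ and recover general $k$ by rescaling; the degenerate case $k=0$ is already covered by the observation in the excerpt that a Novikov algebra with trivial Lie bracket is a Gel'fand--Dorfman bialgebra.

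First I would check that $[a,b]^-_k=k(a\circ b-b\circ a)$ is a Lie bracket. Skew-symmetry is immediate from the definition. For the Jacobi identity the key observation is that the first Novikov axiom is precisely the left-symmetry (pre-Lie) identity: writing the associator as $(a,b,c):=(a\circ b)\circ c-a\circ(b\circ c)$, it asserts that $(a,b,c)$ is symmetric in its first two arguments. It is standard that the commutator of any left-symmetric algebra satisfies the Jacobi identity; concretely, expanding the Jacobiator $[[a,b]^-_k,c]^-_k+[[b,c]^-_k,a]^-_k+[[c,a]^-_k,b]^-_k$ and grouping the resulting associators into conjugate pairs, each pair cancels by left-symmetry. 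Thus $(V,[\cdot,\cdot]^-_k)$ is a Lie algebra.

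The substantive step is the compatibility condition. I would substitute the definition of the bracket into
\[
[a\circ b, c]-[a\circ c, b]+[a,b]\circ c-[a,c]\circ b-a\circ [b,c]=0,
\]
clear the common factor $k$, and expand into ten signed monomials in $\circ$. The second Novikov axiom, right-commutativity $(x\circ y)\circ z=(x\circ z)\circ y$, immediately collapses the four terms of the form $(a\circ b)\circ c$ and $(a\circ c)\circ b$ to zero. The remaining six terms, each of the form $x\circ(y\circ z)$ or $(y\circ x)\circ z$, are then rewritten by the left-symmetry axiom to exchange associators, after which they cancel in pairs, the final cancellation again invoking right-commutativity. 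A short and careful computation confirms that the cancellation goes through cleanly when the axioms are applied in this order.

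The main obstacle here is organizational rather than conceptual: the expansion produces many similar-looking monomials, and the argument depends on applying the two Novikov axioms in the correct sequence so that every term locates its partner. The conceptual content is light once one recognizes that the first Novikov axiom hands us the Jacobi identity for free via the pre-Lie property, and that the second axiom is what drives the cancellations in the compatibility relation; nothing beyond the two defining Novikov identities is required.
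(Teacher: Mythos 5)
Your proposal is correct, but it takes a different route from the paper only in the sense that the paper gives no computation at all: its entire proof of this proposition is a citation of Theorem 2.3 in Xu's paper \cite{X1}. Your direct verification checks out. The expansion of the compatibility relation with $[x,y]=k(x\circ y-y\circ x)$ does produce, after cancelling $k$, exactly ten monomials; the four of the shape $(a\circ b)\circ c$ and $(a\circ c)\circ b$ sum to $2\bigl((a\circ b)\circ c-(a\circ c)\circ b\bigr)$ and vanish by right-commutativity; the remaining six combine, via left-symmetry of the associator $(x,y,z)=(x\circ y)\circ z-x\circ(y\circ z)$ in its first two slots, into $(a\circ c)\circ b-(a\circ b)\circ c$, which vanishes by right-commutativity again. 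Likewise your appeal to the standard fact that the commutator of a pre-Lie (left-symmetric) product satisfies Jacobi is exactly what the first Novikov axiom provides, and skew-symmetry is immediate. What your approach buys is self-containedness: a reader need not consult \cite{X1} to see why the statement holds, and the argument makes transparent which Novikov axiom is responsible for which cancellation. What the paper's citation buys is brevity and an explicit pointer to the origin of the construction. The one stylistic caveat is that your reduction to $k=1$ "by rescaling" is harmless but unnecessary, since $k$ factors out of every identity linearly (Jacobi is cubic in the bracket, hence carries a factor $k^3$, but still vanishes iff the $k=1$ Jacobiator does when $k\neq 0$), and the $k=0$ case is indeed the trivial-bracket observation already recorded in the paper.
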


\begin{proof}
It can be referred to Theorem 2.3 in \cite{X1}.
\end{proof}

Let $(V, \circ, [\cdot,\cdot])$ be a Gel'fand-Dorfman  bialgebra. For convenience, we call $(V,[\cdot,\cdot])$ a \emph{Lie algebra over the Novikov algebra} $(V,\circ)$ and $(V,\circ)$ a \emph{Novikov algebra over the Lie algebra} $(V,[\cdot,\cdot])$.

A  subspace $I$ of a Gel'fand-Dorfman bialgebra $(V, \circ, [\cdot,\cdot])$ is called a \emph{Gel'fand-Dorfman ideal} if it is an ideal of the Lie algebra $(V,[\cdot,\cdot])$  and an ideal of Novikov algebra $(V, \circ)$. The zero ideal and $V$ itself are trivial Gel'fand-Dorfman ideals of any Gel'fand-Dorfman bialgebra $(V, \circ, [\cdot,\cdot])$. The other ideals are said to be \emph{proper}. Obviously, for a Gel'fand-Dorfman bialgebra $(V, \circ, [\cdot,\cdot])$, if $(V,\circ)$ or $(V,[\cdot,\cdot])$ is simple, then $(V, \circ, [\cdot,\cdot])$ has no proper Gel'fand-Dorfman ideals.

An equivalent characterization of quadratic Lie conformal algebras is given as follows.
\begin{theorem}(see \cite{GD} or \cite{X1})
$R=\mathbb{C}[\partial]V$ is a quadratic Lie conformal algebra if and only if the $\lambda$-bracket of $R$ is given as
follows
$$[a_{\lambda} b]=\partial(b\circ a)+[b, a]+\lambda(b\circ a+a\circ b), \text{$a$, $b\in V$},$$
and $(V, \circ, [\cdot,\cdot])$ is a Gel'fand-Dorfman bialgebra. Therefore, $R$ is called the quadratic Lie conformal algebra corresponding to
the Gel'fand-Dorfman bialgebra $(V, \circ, [\cdot,\cdot])$.
\end{theorem}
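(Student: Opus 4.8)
The plan is to set up an explicit dictionary between the three bilinear maps appearing in the quadratic bracket and the pair $(\circ,[\cdot,\cdot])$, and then to translate each Lie conformal algebra axiom into an identity on $V$. I would write the bracket of two generators as $[a_\lambda b]=\partial\,u(a,b)+\lambda\,v(a,b)+w(a,b)$ for bilinear maps $u,v,w\colon V\times V\to V$. Since $R=\mathbb{C}[\partial]V$ is a free module, conformal sesquilinearity is not a constraint on $u,v,w$ but merely prescribes how the bracket extends from $V$ to all of $R$; so the entire bracket is determined by $u,v,w$, and I only have to analyze skew-symmetry and the Jacobi identity.

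First I would extract the operations from skew-symmetry. Computing $-[b_{-\lambda-\partial}a]$ by substituting $\mu=-\lambda-\partial$ into $[b_\mu a]$, letting $\partial$ act on the coefficient to its right, and comparing with $[a_\lambda b]$ by matching the coefficient of $\lambda$, the coefficient of $\partial$, and the constant term, yields exactly that $v$ is symmetric, $w$ is skew, and $u(a,b)+u(b,a)=v(a,b)$. Defining $b\circ a:=u(a,b)$ and $[b,a]:=w(a,b)$ then forces $v(a,b)=a\circ b+b\circ a$ and makes $[\cdot,\cdot]$ anticommutative, so the bracket takes precisely the claimed form. This step is straightforward bookkeeping once the substitution convention for $\partial$ is fixed.

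The core of the proof is the Jacobi identity $[a_\lambda[b_\mu c]]=[[a_\lambda b]_{\lambda+\mu}c]+[b_\mu[a_\lambda c]]$, which I would expand as a polynomial identity in $\lambda$ and $\mu$ with coefficients in $R$. The only subtlety is the first term on the right: since $[a_\lambda b]=\partial(b\circ a)+\cdots$ carries a factor of $\partial$, I apply $[\partial x_{\lambda+\mu}y]=-(\lambda+\mu)[x_{\lambda+\mu}y]$ to move it out, and this is where the cross terms between the two operations are produced. Collecting the coefficients of the independent monomials in $\lambda,\mu$ then splits the single conformal Jacobi identity into a finite list of identities on $V$: the part independent of $\lambda$ and $\mu$ reproduces the Jacobi identity of $(V,[\cdot,\cdot])$, the top-degree coefficients in $\lambda$ and $\mu$ reproduce the two Novikov axioms of $(V,\circ)$, and the remaining mixed coefficient yields precisely the Gel'fand--Dorfman compatibility relation \eqref{eqq3}. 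I expect this coefficient-matching to be the main obstacle, purely because of the volume of terms and the care needed in tracking $\partial$ through the sesquilinearity rules.

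Finally, the converse direction runs the same computation backwards: given a Gel'fand--Dorfman bialgebra I define the $\lambda$-bracket by the displayed formula and extend it to $R$ by sesquilinearity, so sesquilinearity holds by construction; skew-symmetry is immediate from the anticommutativity of $[\cdot,\cdot]$ together with the identity $v=a\circ b+b\circ a$ established above; and the Jacobi identity holds because the two Novikov axioms, the Lie Jacobi identity, and the compatibility condition are exactly the coefficient identities assembled in the previous paragraph. Since every implication used is an equivalence at the level of matched coefficients, this delivers the ``if and only if'' at once.
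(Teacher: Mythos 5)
The paper offers no proof of this statement---it is quoted from \cite{GD} and \cite{X1}---so there is no internal argument to compare against; your strategy (extract $u,v,w$ from the bracket, use freeness to dispose of sesquilinearity, read off the operations from skew-symmetry, and convert the conformal Jacobi identity into polynomial identities on $V$ by matching coefficients of $\lambda^i\mu^j\partial^k$) is exactly the standard proof, essentially the one in \cite{X1}. Your skew-symmetry analysis is complete and correct: it does force $v$ symmetric, $w$ skew, and $u(a,b)+u(b,a)=v(a,b)$, hence the displayed form of the bracket. The one place where the sketch overstates what the computation delivers is the claim that individual coefficients ``reproduce'' individual axioms. In fact several coefficients yield identities that are only consequences of combinations of the axioms rather than the axioms themselves; for instance the coefficient of $\lambda\mu$ gives
\begin{equation*}
a\ast(b\ast c)-(a\ast b)\ast c-b\ast(a\ast c)+2\,(b\circ a)\ast c=0,
\end{equation*}
which follows from, but is not, a Novikov axiom, and similarly the coefficient of $\lambda^2$ gives $a\ast(c\circ b)+(b\circ a)\ast c-(a\ast b)\ast c=0$. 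The actual content of the theorem is that the \emph{full system} of coefficient identities is equivalent to the conjunction of the two Novikov axioms, the Lie Jacobi identity, and the compatibility condition \eqref{eqq3}; establishing that equivalence in both directions is the entire (mechanical but lengthy) proof, and as written you have correctly set it up and asserted, rather than carried out, its outcome.
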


Moreover, if a Gel'fand-Dorfman bialgebra is a Novikov algebra with a trivial Lie algebra structure, for convenience, we usually say the \emph{quadratic Lie conformal algebra corresponds to the Novikov algebra.}

\section{Simplicity of quadratic Lie conformal algebras}

In this section, we will give some sufficient conditions and necessary conditions for the simplicity of quadratic Lie conformal algebras. Since the current Lie conformal algebra $R=\mathbb{C}[\partial]\mathfrak{g}$ associated with a Lie algebra $\mathfrak{g}$ is simple
if and only if $\mathfrak{g}$ is simple, we always assume that quadratic Lie conformal algebras are not current in the sequel, i.e., the Novikov algebra structure of the corresponding Gel'fand-Dorfman bialgebra is non-trivial.

To achieve our purpose, let us define $a\ast b=a\circ b+b\circ a$ for any $a$, $b\in V$, where $V$ is a Gel'fand-Dorfman  bialgebra. Then, $(V, \ast)$ is a commutative but not (usually) associative algebra. In fact,
$\ast$ satisfies the following equality:
\begin{eqnarray}\label{eqn}
(a\ast b)\ast(c\ast d)-(a\ast d)\ast(c\ast d)=(a,b,c)\ast d-(a,d,c)\ast b,
\end{eqnarray}
where $(a,b,c)=a\ast(b\ast c)-(a\ast b)\ast c$, and $a$, $b$, $c$, $d\in V$. In \cite{Dz}, (\ref{eqn}) is called \emph{Tortken identity}, and
$(V,\ast)$ is called \emph{Novikov-Jordan algebra}. A subspace $I$ of a Novikov-Jordan algebra $(V,\ast)$ is called an \emph{ideal} of $V$ if $a\ast b\in I$ for
any $a\in I$ and $b\in V$. Any nonzero Novikov-Jordan algebra $V$ has two trivial ideals $0$ and $V$. An algebra $(V,\ast)$ is called a \emph{simple} Novikov-Jordan algebra if $V$ has only trivial ideals and $a\ast b\neq 0$ for some $a,b\in V$. We also denote the $\mathbb{C}$-span of all elements $a\ast b$ by $V\ast V$ where $a$, $b\in V$.

Using the operation $\ast$, the $\lambda$-bracket of a quadratic Lie conformal algebra is given as follows
\begin{eqnarray}\label{gf1}
[a_{\lambda} b]=\partial(b\circ a)+[b, a]+\lambda(a\ast b), \text{$a$, $b\in V$}.
\end{eqnarray}

Then, we present some necessary conditions for the simplicity of quadratic Lie conformal algebras.

\begin{proposition}\label{pp1}
If the quadratic Lie conformal algebra $R=\mathbb{C}[\partial]V$ corresponding to a Gel'fand-Dorfman bialgebra
$(V,\circ, [\cdot,\cdot])$ is simple, then $(V,\circ, [\cdot,\cdot])$ has no proper Gel'fand-Dorfman ideals.

\end{proposition}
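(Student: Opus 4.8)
The statement is a contrapositive-friendly claim: simplicity of the conformal algebra $R=\mathbb{C}[\partial]V$ forces the underlying Gel'fand-Dorfman bialgebra to have no proper ideals. So the natural plan is to prove the contrapositive: given a proper Gel'fand-Dorfman ideal $I$ of $(V,\circ,[\cdot,\cdot])$, I would construct a nontrivial ideal of $R$ and thereby contradict simplicity. The obvious candidate is $\mathbb{C}[\partial]I\subset R$, the $\mathbb{C}[\partial]$-submodule generated by $I$. The goal is then to verify that this is a genuine conformal ideal, that is, a $\mathbb{C}[\partial]$-submodule closed under $[R_\lambda\,\cdot]$, and that it is neither $0$ nor all of $R$.

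First I would check that $\mathbb{C}[\partial]I$ is a proper $\mathbb{C}[\partial]$-submodule: it is a submodule by construction, it is nonzero because $I\neq 0$, and since $R=\mathbb{C}[\partial]V$ is free over $\mathbb{C}[\partial]$ on $V$ and $I\subsetneq V$, we have $\mathbb{C}[\partial]I\subsetneq R$. The freeness is what guarantees that the inclusion $I\subsetneq V$ of $\mathbb{C}$-subspaces lifts to a strict inclusion of $\mathbb{C}[\partial]$-modules, so $\mathbb{C}[\partial]I$ is a genuinely nontrivial submodule.

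The main verification is that $\mathbb{C}[\partial]I$ is an ideal, i.e.\ $[R,\mathbb{C}[\partial]I]\subset \mathbb{C}[\partial]I$. Because the bracket is conformal sesquilinear, it suffices to check $[a_\lambda b]\in \mathbb{C}[\partial]I[\lambda]$ for $a\in V$ and $b\in I$ (the $\partial$'s on either side pull out as polynomials in $\lambda$ and $\partial$). Here I would invoke the explicit formula \eqref{gf1}, namely
\begin{equation*}
[a_\lambda b]=\partial(b\circ a)+[b,a]+\lambda(a\ast b),
\end{equation*}
and read off the three $\lambda$-coefficients. Since $b\in I$ and $I$ is an ideal of the Novikov algebra $(V,\circ)$, we get $b\circ a\in I$; since $I$ is an ideal of the Lie algebra $(V,[\cdot,\cdot])$, we get $[b,a]\in I$; and since $a\ast b=a\circ b+b\circ a$ with $I$ absorbing $\circ$ from both sides, we get $a\ast b\in I$. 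Hence every coefficient of $[a_\lambda b]$ lies in $I\subset\mathbb{C}[\partial]I$, as required.

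The only subtlety — and the single place where I expect to have to be careful rather than where I expect real difficulty — is making sure I have used \emph{both} halves of the definition of a Gel'fand-Dorfman ideal: the Novikov-ideal condition $V\circ I, I\circ V\subset I$ controls the $\partial$- and $\lambda$-coefficients, while the Lie-ideal condition $[V,I]\subset I$ controls the constant term. A proper Gel'fand-Dorfman ideal is required to be simultaneously both kinds of ideal, which is exactly what the three summands in \eqref{gf1} demand, so the construction goes through cleanly. Putting the pieces together, $\mathbb{C}[\partial]I$ is a nontrivial ideal of $R$, contradicting simplicity; therefore a simple $R$ admits no proper Gel'fand-Dorfman ideal.
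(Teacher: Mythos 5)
Your proposal is correct and follows exactly the paper's argument: pass to a proper Gel'fand--Dorfman ideal $I$, observe that $I$ absorbs $\circ$, $\ast$, and $[\cdot,\cdot]$, and conclude from the explicit $\lambda$-bracket \eqref{gf1} that $\mathbb{C}[\partial]I$ is a nontrivial conformal ideal, contradicting simplicity. You merely spell out the details (freeness of $R$ over $\mathbb{C}[\partial]$ and the coefficient-by-coefficient check) that the paper leaves implicit.
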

\begin{proof}
Suppose that $I$ is a proper Gel'fand-Dorfman ideal of $(V,\circ,[\cdot,\cdot])$. Since $I$ is an ideal of $(V,\circ)$, $I$ is also an ideal of $(V,\ast)$. Then, by (\ref{gf1}), it is easy to see that $\mathbb{C}[\partial]I$ is a non-trivial ideal of $R$ which contradicts with the simplicity of $R$. Thus, this proposition holds.
\end{proof}

\begin{remark} It should be pointed out that the necessary condition in Proposition \ref{pp1} is not sufficient. For example, there are non-trivial Novikov algebra structures over the Lie algebra $sl_2$ (see \cite{X5}). It is obvious that these Gel'fand-Dorfman bialgebras have no proper Gel'fand-Dorfman ideals. Since the Lie conformal algebras corresponding to these Gel'fand-Dorfman bialgebras do not belong to those simple objects classified in \cite{DK1} , they are not simple. \end{remark}

\begin{corollary}\label{cv}
If a simple Lie conformal algebra $R=\mathbb{C}[\partial]V$ corresponds to a Gel'fand-Dorfman bialgebra $(V,\circ,[\cdot,\cdot]^-_k)$ defined in Proposition \ref{pp2}, then $(V,\circ)$ is a simple Novikov algebra.

In special, when $k=0$, the Lie conformal algebra $R=\mathbb{C}[\partial]V$ corresponding to a Novikov algebra $(V,\circ)$ is simple, then $(V,\circ)$ is simple.
\end{corollary}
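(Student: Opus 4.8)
The plan is to deduce this from Proposition \ref{pp1} together with the special form of the Lie bracket $[\cdot,\cdot]^-_k$. By Proposition \ref{pp1}, simplicity of $R$ forces the Gel'fand-Dorfman bialgebra $(V,\circ,[\cdot,\cdot]^-_k)$ to have no proper Gel'fand-Dorfman ideals, so it suffices to show that every ideal of the Novikov algebra $(V,\circ)$ is automatically a Gel'fand-Dorfman ideal. Once that is established, a hypothetical proper Novikov ideal would produce a proper Gel'fand-Dorfman ideal, contradicting Proposition \ref{pp1}.

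First I would take an arbitrary ideal $I$ of $(V,\circ)$, so that $V\circ I\subset I$ and $I\circ V\subset I$. To check that $I$ is a Gel'fand-Dorfman ideal I still need it to be an ideal of the Lie algebra $(V,[\cdot,\cdot]^-_k)$. But for $a\in I$ and $b\in V$ the bracket of Proposition \ref{pp2} gives $[a,b]^-_k=k(a\circ b-b\circ a)$, and here $a\circ b\in I\circ V\subset I$ while $b\circ a\in V\circ I\subset I$, so $[a,b]^-_k\in I$. Hence $I$ is simultaneously a Novikov ideal and a Lie ideal, i.e. a Gel'fand-Dorfman ideal. This single computation is what makes everything work, and it is essentially forced by the fact that the Lie bracket is manufactured out of the Novikov product; I do not anticipate any genuine difficulty at this step.

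Combining these observations: since $R$ is simple, $(V,\circ,[\cdot,\cdot]^-_k)$ has no proper Gel'fand-Dorfman ideals, and since every proper ideal of $(V,\circ)$ would be such an ideal, $(V,\circ)$ has no proper ideals. Finally, because we assume throughout this section that the Novikov structure is non-trivial, there exist $a,b\in V$ with $a\circ b\neq 0$, which is precisely the remaining requirement in the definition of a simple Novikov algebra. Therefore $(V,\circ)$ is simple. The special case $k=0$ falls out of the same argument, since then $[\cdot,\cdot]^-_0\equiv 0$, the Lie-ideal condition becomes vacuous, and the Gel'fand-Dorfman ideals coincide exactly with the Novikov ideals; the conclusion is identical. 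The only point demanding a little care is to remember to invoke the standing non-triviality hypothesis in order to secure the condition $a\circ b\neq 0$, rather than merely the absence of proper ideals.
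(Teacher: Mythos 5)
Your proposal is correct and follows essentially the same route as the paper: both reduce to Proposition \ref{pp1} by observing that any Novikov ideal $I$ is automatically a Lie ideal for $[\cdot,\cdot]^-_k$ (since $k(a\circ b-b\circ a)\in I$), hence a Gel'fand-Dorfman ideal. The only cosmetic difference is that the paper argues by contradiction while you argue directly, and you are slightly more careful in explicitly securing the non-triviality condition $a\circ b\neq 0$ from the standing assumption of Section 3.
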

\begin{proof}
Assume that $(V,\circ)$ is not simple. Then, there exists a non-trivial ideal $I$ of $(V,\circ)$. Moreover,
$I$ is not only a non-trivial ideal of $(V,\ast)$, but also of $(V,[\cdot,\cdot]^-_k)$. Therefore, $I$ is a proper Gel'fand-Dorfman ideal.
By Proposition \ref{pp1}, the corresponding Lie conformal algebra $R=\mathbb{C}[\partial]V$ is not simple. It contradicts with the assumption.
Thus, $(V,\circ)$ is simple.

The second claim can be directly obtained from the first claim.
\end{proof}

Next, we begin to study some sufficient conditions for the simplicity of quadratic Lie conformal algebras.

\begin{lemma}\label{ty}
Let $(V,\circ)$ be a simple Novikov algebra, and $(V,\ast)$ be the corresponding  Novikov-Jordan algebra with the operation $a\ast b=a\circ b+b\circ a$ for $a,b\in V$. Then there is no any non-zero element $a\in V$ such that $a\ast b=0$ for any $b\in V$.
\end{lemma}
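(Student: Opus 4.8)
The plan is to study the annihilator
$A=\{a\in V:\ a\ast b=0\ \text{for all}\ b\in V\}$
of the Novikov--Jordan algebra $(V,\ast)$. Since $\ast$ is commutative, $A$ coincides with the two-sided annihilator, and it is obviously a $\mathbb{C}$-subspace, being an intersection of kernels of the linear maps $a\mapsto a\ast b$. The key idea is to show that $A$ is in fact an ideal of the \emph{Novikov} algebra $(V,\circ)$, not merely of $(V,\ast)$. Once that is done, simplicity of $(V,\circ)$ forces $A=0$ or $A=V$, and the statement to be proved is exactly $A=0$; so the whole argument reduces to establishing the ideal property and then excluding the degenerate case $A=V$.

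First I would record the defining feature of an element $a\in A$: the condition $a\ast b=0$ is the identity $a\circ b=-b\circ a$ for every $b\in V$. Using this together with the two Novikov axioms---right-commutativity $(x\circ y)\circ z=(x\circ z)\circ y$ and left-symmetry of the associator $(x\circ y)\circ z-x\circ(y\circ z)=(y\circ x)\circ z-y\circ(x\circ z)$---I would compute $(a\circ c)\ast b$ for arbitrary $b,c\in V$. Rewriting $(a\circ c)\circ b$ by right-commutativity as $(a\circ b)\circ c=-(b\circ a)\circ c$, and expanding $b\circ(a\circ c)$ through the associator identity while repeatedly substituting $a\circ x=-x\circ a$, the two contributions cancel and yield $(a\circ c)\ast b=0$. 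Hence $a\circ c\in A$; and since $a\in A$ gives $c\circ a=-a\circ c$, the subspace $A$ is closed under $\circ$ from both sides, i.e. it is a two-sided ideal. I expect this bookkeeping with the two identities to be the main technical obstacle of the proof; everything surrounding it is structural.

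It then remains to rule out $A=V$, which says precisely that $\circ$ is anti-commutative, $a\circ b=-b\circ a$ for all $a,b\in V$. Here I would exploit that $(a\circ b)\circ c$ is symmetric in $b$ and $c$ by right-commutativity, feed this symmetry into the associator identity after rewriting it via anti-commutativity, and deduce that every triple product vanishes, i.e. $a\circ(b\circ c)=0$ and $(a\circ b)\circ c=0$ for all $a,b,c$. Consequently $W=V\circ V$ is a two-sided ideal of $(V,\circ)$ annihilated by $V$ from both sides, and $W\neq 0$ because simplicity requires a nonzero product. Simplicity then forces $W=V$; but $V\circ W=0$ combined with $W=V$ gives $V\circ V=0$, contradicting $W\neq0$. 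This contradiction excludes $A=V$, leaving $A=0$, which is the desired conclusion.
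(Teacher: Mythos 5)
Your argument is correct, but it takes a genuinely different route from the paper's. The paper fixes a single nonzero annihilating element $a$, observes $a\circ a=0$, and runs the chain $(a\circ a)\circ c=(a\circ c)\circ a=\cdots=0$ to conclude that $a$ kills, under $\circ$ and from both sides, every element of $a\circ V+V\circ a$; simplicity then makes these products span $V$, so $a$ annihilates all of $V$ and must vanish. You instead prove the global statement that the annihilator $A$ of $(V,\ast)$ is a two-sided ideal of $(V,\circ)$, and this does check out: for $a\in A$, right-commutativity gives $(a\circ c)\circ b=(a\circ b)\circ c=-(b\circ a)\circ c$, while left-symmetry combined with $(b\circ c)\circ a=(b\circ a)\circ c$ yields $b\circ(a\circ c)=(b\circ a)\circ c$, so $(a\circ c)\ast b=0$. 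Your dichotomy $A=0$ or $A=V$ then isolates the degenerate case cleanly, and it makes explicit the ideal-theoretic content that the paper leaves implicit in the unproved assertion that ``$V$ is spanned by $a\circ c$ and $c\circ a$''; in that sense your version is more structural and easier to audit. One small correction to your exclusion of $A=V$: under anticommutativity the left-symmetry (associator) identity degenerates to a tautology, so it is not what forces the triple products to vanish. What does the work is that the trilinear map $(a,b,c)\mapsto(a\circ b)\circ c$ is antisymmetric in $(a,b)$ by anticommutativity and symmetric in $(b,c)$ by right-commutativity, and any trilinear map with these two symmetries is identically zero. With that substitution, your final contradiction ($W=V\circ V$ is a nonzero ideal annihilated by $V$, hence $W=V$ and $V\circ V=0$) goes through exactly as you describe.
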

\begin{proof}Conversely, if there exists some nonzero element $a\in V$ such that $a\ast b=0$ for any $b\in V$, then $a\circ b=-b\circ a$. Therefore, $a\circ a=0$.
Then, by commutativity of right products, we have \begin{eqnarray}\label{tt}\qquad
(a\circ a)\circ c=(a\circ c)\circ a=-(c\circ a)\circ a=-a\circ(a\circ c)=a\circ(c\circ a)=0.
\end{eqnarray}
Since $(V,\circ)$ is simple, $V$ is spanned by
$a\circ c$ and $c\circ a$ for all $c\in V$. Thus $a\circ b=b\circ a=0$ for any $b\in V$ by (\ref{tt}). Consequently, $a=0$. It contradicts with the assumption.
\end{proof}

\begin{theorem}\label{p1}
Given a Gel'fand-Dorfman bialgebra $(V,\circ, [\cdot,\cdot])$. If $(V,\circ)$ is a simple Novikov algebra, and $V=V\ast V$, then the quadratic Lie conformal algebra $R=\mathbb{C}[\partial]V$ corresponding to $(V,\circ, [\cdot,\cdot])$ is simple.
\end{theorem}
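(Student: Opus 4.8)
The plan is to show that every nonzero ideal $J$ of $R$ must equal $R$; since $R=\mathbb{C}[\partial]V$ is a free $\mathbb{C}[\partial]$-module, it suffices to prove $V\subseteq J$, for then $J\supseteq\mathbb{C}[\partial]V=R$. The only structural inputs are the simplicity of $(V,\circ)$, the surjectivity $V=V\ast V$, and Lemma \ref{ty}, so the whole argument must convert information about $J$ into a $\circ$-ideal of $V$ and then into $\ast$-products. Non-abelianness is immediate and can be disposed of at the end: Lemma \ref{ty} supplies $a,b$ with $a\ast b\neq0$, hence $[a_\lambda b]\neq0$.

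First I would extract a nonzero degree-$0$ element. Writing a nonzero $f\in J$ as $f=\sum_{i=0}^n\partial^i v_i$ with $v_n\neq0$, and using conformal sesquilinearity in the form $[a_\lambda\partial^i b]=(\lambda+\partial)^i[a_\lambda b]$ together with (\ref{gf1}), the coefficient of the top power $\lambda^{n+1}$ in $[a_\lambda f]$ is exactly $a\ast v_n$, a degree-$0$ element. As $J$ is an ideal this lies in $W:=J\cap V$, and Lemma \ref{ty} guarantees some $a$ with $a\ast v_n\neq0$; hence $W\neq0$.

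The heart of the matter, and the main obstacle, is promoting $W$ to a $\circ$-ideal. The difficulty is that $J\cap V$ is only visibly closed under $\ast$ (a Novikov--Jordan ideal), whereas the simplicity hypothesis is about $\circ$; moreover the product $w\circ a$ never appears alone in the bracket but always attached to $\partial$, so it cannot be read off directly inside $V$. My remedy is to pass to the space of degree-$\le1$ leading coefficients, $M:=\{v\in V:\partial v+v_0\in J \text{ for some } v_0\in V\}$, which contains $W$ (take $v_0=0$) and is therefore nonzero. Given $\partial v+v_0\in J$ and any $c\in V$, I would compute $[c_\lambda(\partial v+v_0)]$ and read off two coefficients: the $\lambda^2$-coefficient is $c\ast v$, which lands in $W\subseteq M$, while the $\lambda^1$-coefficient is a degree-$\le1$ element of $J$ whose degree-$1$ part is $v\circ c+c\ast v$, so $v\circ c+c\ast v\in M$. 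Subtracting the two gives $v\circ c\in M$, and then $c\circ v=c\ast v-v\circ c\in M$. Thus $M$ is a nonzero $\circ$-ideal, so $M=V$ by simplicity of $(V,\circ)$. Separating $v\circ c$ from the symmetric product $c\ast v$ by using different powers of $\lambda$ is exactly the step that resolves the $\circ$-versus-$\ast$ mismatch.

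Finally I would cash this in. Since $M=V$, every $v\in V$ admits a $v_0$ with $f=\partial v+v_0\in J$, and the top ($\lambda^2$) coefficient of $[a_\lambda f]$ is again $a\ast v$, which lies in $J\cap V=W$. Hence $V\ast V\subseteq W$, and the hypothesis $V=V\ast V$ forces $V=W\subseteq J$, whence $J=R$. Once the leading-coefficient space $M$ is shown to be a $\circ$-ideal the remainder is routine bookkeeping; all the real work is in the third paragraph.
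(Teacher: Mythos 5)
Your proof is correct, and its skeleton is the same as the paper's: extract the top $\lambda$-coefficient of $[a_\lambda f]$ to land a nonzero element of $J\cap V$ via Lemma \ref{ty}, use simplicity of $(V,\circ)$ to spread through $V$ in the form $\partial u+v\in J$, and finish with the $\lambda^2$-coefficient together with $V=V\ast V$. The one place you genuinely diverge is the middle step, and your version is the more careful one. The paper reads off the $\lambda^0$-coefficients of $[b_\lambda w]$ and $[w_\lambda b]$ to get $\partial(w\circ b)+[w,b]$ and $\partial(b\circ w)+[b,w]$ in the ideal, and then immediately invokes simplicity of $(V,\circ)$; as stated this only shows that the one-step products $w\circ b$, $b\circ w$ lie in the relevant set, and iterating via the $\lambda^0$-coefficient produces a $\partial^2$ term, so closure under further multiplication is not automatic. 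Your device of introducing $M=\{v:\partial v+v_0\in J\}$ and separating $v\circ c$ from $c\ast v$ by comparing the $\lambda^1$- and $\lambda^2$-coefficients is exactly what makes $M$ a bona fide $\circ$-ideal, closed under both left and right multiplication, so the appeal to simplicity is airtight. In short: same strategy, but your third paragraph supplies a justification that the paper's proof leaves implicit (or slightly gapped).
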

\begin{proof}
Let $I$ be a non-zero ideal of $R$ and $P=\sum\limits_{i=0}^nP_i(\partial)a_i \in I$, where $a_i \in V$ ($0\leq i\leq n$)  are linearly independent and $P_i(\partial)\in \mathbb{C}[\partial]\setminus\{0\}$. Assume that the degrees of $P_m(\partial)$, $\cdots$, $P_n(\partial)$ are maximal in those $P_i(\partial)$, and the leading coefficients of $P_m(\partial)$, $\cdots$, $P_n(\partial)$ are $k_m$, $\cdots$, $k_n$. Let the degree of $P_m(\partial)$ be $t$. For any $a\in V$,
\begin{eqnarray}\label{eq1}
[a_\lambda P]=\sum_{i=0}^n P_i(\lambda+\partial)(\partial(a_i\circ a)+\lambda(a\ast a_i)+[a_i,a]).
\end{eqnarray}
According to the coefficient of $\lambda^{t+1}$ in (\ref{eq1}), we obtain $w=a\ast(k_m a_m+\cdots+k_n a_n)\in I$ for any $a\in V$.
By Lemma \ref{ty}, there exists some $a\in V$ such that $w\neq 0$. Then, since
\begin{eqnarray*}\label{eqr}
[b_\lambda w]=\partial(w\circ b)+\lambda(b\ast w)+[w,b], \text{and} [w_\lambda b]=\partial(b\circ w)+\lambda(b\ast w)+[b,w],
\end{eqnarray*}
we get $\partial(w\circ b)+[w,b]$, $\partial(b\circ w)+[b,w]\in I$ from the coefficient of $\lambda^0$ for any $b\in V$.
Since $(V,\circ)$ is simple, $\partial u+v\in I$ for any $u\in V$, and some $v\in V$.
By \begin{eqnarray*}
[b_\lambda \partial u+v]&&=(\lambda+\partial)(\partial(u\circ b)+\lambda(b\ast u)+[u,b])\\
&&+\partial(v\circ b)+\lambda(b\ast v)+[v,b],
\end{eqnarray*}
we obtain $b\ast u \in I$ from the coefficient of $\lambda^2$  for any $b$, $u\in V$. Then, by the assumption, $V\subset I$. Since $I$ is a $\mathbb{C}[\partial]$-module, $I=R$, i.e., $R$ is simple.

\end{proof}
\begin{remark}
When $(V,\circ)$ is simple, the assumption $V=V\ast V$ for $(V,\ast)$ in Theorem \ref{p1} is not usually satisfied. For example,
for the simple Novikov algebra $(A_2,\circ)$ in Lemma \ref{ll4}, when $2b\in \Delta$, it is obvious that the element $L_{-2b}\notin A_2\ast A_2$.

In addition, there are some natural conditions to make $(V,\ast)$ satisfy the assumption $V=V\ast V$.
For example, $(V,\ast)$ is a Novikov-Jordan algebra with a unit. In special, when $(V,\circ)$ is simple and commutative,
it is easy to see that $(V,\ast)$ satisfies the assumption.
\end{remark}

\begin{corollary}
Given a simple Novikov algebra $(V,\circ)$ and Let $R=\mathbb{C}[\partial]V$ be the corresponding Lie conformal algebra. Then, $\mathbb{C}[\partial]\partial V\subset I$ for any nonzero ideal $I$ of $R$.
\end{corollary}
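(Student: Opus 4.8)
The plan is to mimic the first half of the proof of Theorem \ref{p1} to extract a nonzero element of $V$ lying in $I$, and then, instead of invoking the hypothesis $V=V\ast V$ (which is unavailable here), to show that the subspace $A=\{u\in V:\partial u\in I\}$ is a nonzero ideal of the Novikov algebra $(V,\circ)$; simplicity then forces $A=V$, i.e. $\partial V\subset I$.

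First I would recall that here the corresponding Lie conformal algebra has trivial Lie bracket, so by (\ref{gf1}) the $\lambda$-bracket is $[a_\lambda b]=\partial(b\circ a)+\lambda(a\ast b)$. Taking any nonzero $P=\sum_i P_i(\partial)a_i\in I$ with the $a_i$ linearly independent, and reading off the top $\lambda$-coefficient of $[a_\lambda P]$ exactly as in the proof of Theorem \ref{p1}, I obtain that $a\ast w'\in I$ for every $a\in V$, where $w'$ is the fixed nonzero combination of the $a_i$ of maximal polynomial degree. By Lemma \ref{ty} there is some $a$ with $w:=a\ast w'\neq 0$, so I have produced a nonzero $w\in I\cap V$. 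Since $I$ is a $\mathbb{C}[\partial]$-module, $\partial w\in I$, whence $w\in A$ and $A\neq 0$.

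The core step is to verify that $A$ is an ideal of $(V,\circ)$, that is, $u\circ c,\ c\circ u\in A$ whenever $u\in A$ and $c\in V$. For $u\in A$ I have $\partial u\in I$. Expanding $[c_\lambda \partial u]=(\lambda+\partial)(\partial(u\circ c)+\lambda(c\ast u))$ and extracting $\lambda$-coefficients, the $\lambda^2$-coefficient gives $c\ast u\in I$, whence $\partial(c\ast u)\in I$; subtracting this from the $\lambda^1$-coefficient $\partial(u\circ c)+\partial(c\ast u)$ isolates $\partial(u\circ c)\in I$, i.e. $u\circ c\in A$. Similarly, from $[(\partial u)_\lambda c]=-\lambda(\partial(c\circ u)+\lambda(u\ast c))$ the $\lambda^1$-coefficient yields $\partial(c\circ u)\in I$, so $c\circ u\in A$; thus $A$ is an ideal. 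The main subtlety, and the reason the statement only claims $\partial V$ rather than $V$, is that it is precisely the vanishing of the Lie bracket that lets me peel off $\partial(c\ast u)$ cleanly and isolate $\partial(u\circ c)$: with a nonzero bracket a term $[u,c]$ would remain entangled, which is exactly what forces the extra hypothesis $V=V\ast V$ in Theorem \ref{p1}.

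Finally, since $(V,\circ)$ is simple and $A$ is a nonzero ideal, $A=V$, so $\partial u\in I$ for every $u\in V$; as $I$ is a $\mathbb{C}[\partial]$-submodule, this gives $\mathbb{C}[\partial]\partial V\subset I$.
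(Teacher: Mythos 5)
Your proof is correct and takes essentially the same route as the paper, which simply declares the corollary to be ``directly obtained from the proof of Theorem \ref{p1}''; your argument is precisely that proof specialized to the trivial Lie bracket, with the implicit step made explicit that $A=\{u\in V:\partial u\in I\}$ is a nonzero ideal of $(V,\circ)$ and hence all of $V$ by simplicity.
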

\begin{proof}
It can be directly obtained from the proof of Theorem \ref{p1}.
\end{proof}

\begin{lemma}\label{l1}
If $(V, \ast)$ is a simple Novikov-Jordan algebra, then $(V,\circ)$ is a simple Novikov algebra.
\end{lemma}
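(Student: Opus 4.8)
The plan is to exploit the fact that the two algebra structures live on the same underlying space $V$ and are linked by the single elementary identity $a\ast b=a\circ b+b\circ a$. Via this identity one sees that every ideal of the Novikov algebra $(V,\circ)$ is automatically an ideal of the Novikov-Jordan algebra $(V,\ast)$, so the simplicity hypothesis on $(V,\ast)$ directly forces a lack of proper ideals for $(V,\circ)$. The whole argument thus reduces to transporting the ideal condition across this identity and separately checking the non-triviality clause in the definition of a simple Novikov algebra.

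First I would verify the ideal correspondence. Let $I$ be an ideal of the Novikov algebra $(V,\circ)$, so that $V\circ I\subseteq I$ and $I\circ V\subseteq I$. For $a\in I$ and $b\in V$ we have $a\ast b=a\circ b+b\circ a$, where $a\circ b\in I\circ V\subseteq I$ and $b\circ a\in V\circ I\subseteq I$, hence $a\ast b\in I$. Since $\ast$ is commutative, this is precisely the condition for $I$ to be an ideal of $(V,\ast)$ (this is the same observation already used in the proof of Proposition \ref{pp1}). Consequently, any proper nontrivial ideal of $(V,\circ)$ would also be a proper nontrivial ideal of $(V,\ast)$, contradicting the assumed simplicity of $(V,\ast)$. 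Therefore $(V,\circ)$ has only the trivial ideals $0$ and $V$.

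It then remains only to confirm that the $\circ$-product is not identically zero, as the definition of a simple Novikov algebra also demands. Because $(V,\ast)$ is simple, there exist $a,b\in V$ with $a\ast b\neq 0$, that is $a\circ b+b\circ a\neq 0$, so at least one of $a\circ b$ or $b\circ a$ is nonzero; hence $(V,\circ)$ has a nonzero product. Combining this with the previous paragraph shows that $(V,\circ)$ is simple. I do not expect any genuine obstacle: the content is entirely contained in the one-line inclusion ``$\circ$-ideal $\Rightarrow$ $\ast$-ideal''. The only point worth keeping in mind is that the reverse inclusion need not hold (a $\ast$-ideal is not obviously a $\circ$-ideal), which is exactly why this direction is easy while the converse implication is not available for free; and one must remember to check the non-triviality clause and not merely the absence of proper ideals.
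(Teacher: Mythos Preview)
Your proof is correct and follows essentially the same approach as the paper: both observe that an ideal of $(V,\circ)$ is automatically an ideal of $(V,\ast)$ via $a\ast b=a\circ b+b\circ a$, and derive a contradiction with simplicity of $(V,\ast)$. Your write-up is in fact slightly more careful, since you explicitly verify the non-triviality clause (that $\circ$ is not identically zero), which the paper's one-line argument leaves implicit.
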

\begin{proof}
If $(V,\circ)$ is not a simple Novikov agebra, then there exists a non-trivial ideal $I$ of $V$. But $(I,\ast)$ is also a non-trivial ideal of
$(V,\ast)$. It contradicts with the simplicity of $(V,\ast)$. Therefore, $(V,\circ)$ is a simple Novikov algebra.
\end{proof}

\begin{proposition}\label{po}
Given a Gel'fand-Dorfman bialgebra $(V,\circ, [\cdot,\cdot])$. If $(V,\ast)$ is a simple Novikov-Jordan algebra, then
the quadratic Lie conformal algebra $R=\mathbb{C}[\partial]V$ corresponding to $(V,\circ, [\cdot,\cdot])$ is simple.

\end{proposition}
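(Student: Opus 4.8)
The plan is to reduce the statement to Theorem \ref{p1}, which already delivers the simplicity of $R$ provided two hypotheses hold: that $(V,\circ)$ is a simple Novikov algebra, and that $V = V\ast V$. The first of these is handed to us for free by Lemma \ref{l1}, since $(V,\ast)$ is assumed to be a simple Novikov-Jordan algebra. Hence the entire task reduces to verifying the surjectivity-type condition $V = V\ast V$, after which Theorem \ref{p1} applies verbatim.

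To establish $V = V\ast V$, I would first observe that $V\ast V$, the $\mathbb{C}$-span of all products $a\ast b$, is an ideal of the Novikov-Jordan algebra $(V,\ast)$. The point is that each generator $a\ast b$ already lies in $V$, so for any $c\in V$ the element $(a\ast b)\ast c$ is again a product of two elements of $V$ and therefore belongs to $V\ast V$. Extending linearly shows that $V\ast V$ is closed under $\ast$-multiplication by arbitrary elements of $V$, and by commutativity of $\ast$ this one-sided closure is automatically two-sided, so $V\ast V$ is an ideal in the sense defined for Novikov-Jordan algebras. Since $(V,\ast)$ is simple, its only ideals are $0$ and $V$; and by the very definition of simplicity there exist $a,b\in V$ with $a\ast b\neq 0$, whence $V\ast V\neq 0$. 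Therefore $V\ast V = V$, and both hypotheses of Theorem \ref{p1} are in force.

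I do not anticipate any genuine obstacle: the argument is a short bootstrap from two previously established facts, and the only mildly delicate point is confirming that $V\ast V$ really qualifies as an ideal under the stated (one-sided-looking but effectively two-sided, by commutativity of $\ast$) definition. Once that is checked, the conclusion that $R=\mathbb{C}[\partial]V$ is simple follows immediately from Theorem \ref{p1}.
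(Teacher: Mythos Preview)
Your proposal is correct and follows exactly the route the paper intends: invoke Lemma~\ref{l1} to get simplicity of $(V,\circ)$, then apply Theorem~\ref{p1}. The paper's proof is a one-liner citing only those two results, leaving the verification of $V=V\ast V$ implicit; you have correctly spelled out that $V\ast V$ is a nonzero ideal of the simple algebra $(V,\ast)$, which is precisely the missing detail.
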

\begin{proof}
It can be immediately obtained from Lemma \ref{l1} and Theorem \ref{p1}.
\end{proof}

 Under the assumption in Theorem \ref{p1}, the simplicity of the quadratic Lie conformal algebra is independent with the Lie algebra structure oover $(V,\circ)$. Therefore, Theorem \ref{p1} provides us a strategy to find infinite simple Lie conformal algebras:\\
1. Classify infinite-dimensional simple Novikov algebras.\\
2. Check whether the Novikov-Jordan algebra $(V,\ast)$ satisfies the assumption $V=V\ast V$ for the obtained infinite-dimensional simple Novikov algebra $(V,\circ)$.\\
3. Classify the Lie algebra structures over $(V,\circ)$ if $V=V\ast V$ for $(V,\ast)$.\\

\section{Several classes of new infinite simple Lie conformal algebras}
In this section, according to several classifications of infinite-dimensional simple Novikov algebras  given in \cite{Os1, OZ, X2, X3},
using the strategy mentioned in Section 3, we introduce several infinite simple Lie conformal algebras.
\begin{lemma}\label{ll2}
Assume $A_1=\oplus_{i\geq -1}^{+\infty}\mathbb{C}L_i$ with the product
\begin{eqnarray}\label{eqq5}
L_i \circ L_j=(j+1)L_{i+j},\text{ for all $i$, $j\geq-1$.}
\end{eqnarray}
Then, $(A_1,\circ)$ is a simple Novikov algebra. Moreover, for the Novikov-Jordan algebra $(A_1,\ast)$ where $\ast$ is given by
\begin{eqnarray}
L_i \ast L_j=(i+j+2)L_{i+j},\text{ for all $i$, $j\geq-1$,}
\end{eqnarray} we obtain $A_1=A_1\ast A_1$.
\end{lemma}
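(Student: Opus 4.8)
The plan is to verify two separate claims: that $(A_1,\circ)$ is a simple Novikov algebra, and that $A_1 = A_1 \ast A_1$. These are largely independent, so I would handle them in turn.

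For simplicity of $(A_1,\circ)$, first I would record the defining relation $L_i \circ L_j = (j+1)L_{i+j}$ and check (though this is presumably already known from the cited classification) that it genuinely satisfies the two Novikov axioms; I would treat this as given since the lemma asserts it. The real work is showing there are no proper ideals. I would let $I$ be a nonzero ideal and pick a nonzero element $x = \sum_i c_i L_i \in I$ with finitely many nonzero coefficients. The strategy is to use the products with various $L_j$ to isolate individual basis vectors. A key observation is that $L_{-1} \circ L_j = (j+1)L_{j-1}$ shifts indices down while scaling, and $L_i \circ L_0 = L_i$, so $L_0$ acts almost like an identity on the right. Multiplying $x$ by well-chosen $L_j$ on the left and right and taking linear combinations should let me extract a single $L_k \in I$; then from one $L_k$ I can climb to all $L_m$ via $L_k \circ L_{m-k} = (m-k+1)L_m$ (nonzero whenever $m - k \neq -1$) and descend via $L_{-1}$, concluding $I = A_1$. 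The main obstacle here is the bookkeeping needed to guarantee a single basis element can be isolated without the scaling factors $(j+1)$ accidentally vanishing; I would need to argue that the lowest-index term in $x$ can always be separated, perhaps by applying $L_{-1}$ repeatedly to control the support.

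For the second claim, $A_1 = A_1 \ast A_1$, I would compute $\ast$ explicitly from its definition $a \ast b = a\circ b + b \circ a$: indeed $L_i \ast L_j = (j+1)L_{i+j} + (i+1)L_{i+j} = (i+j+2)L_{i+j}$, matching the stated formula. To show surjectivity of $\ast$, it suffices to produce each $L_m$ (for $m \geq -1$) as a $\ast$-product. The cleanest route is to use $L_i \ast L_j$ with $i + j = m$ and coefficient $i + j + 2 = m + 2$; since $m \geq -1$ gives $m + 2 \geq 1 \neq 0$, the product $L_i \ast L_j = (m+2)L_m$ is a nonzero scalar multiple of $L_m$, hence $L_m \in A_1 \ast A_1$. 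For instance, taking $j = -1$ and $i = m+1$ (valid whenever $m + 1 \geq -1$, i.e.\ always) yields $L_{m+1} \ast L_{-1} = (m+2)L_m$. Thus every basis element lies in $A_1 \ast A_1$, and since $A_1 \ast A_1$ is a subspace containing all basis vectors, $A_1 = A_1 \ast A_1$.

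I expect the simplicity argument to be the genuine obstacle, while the $A_1 = A_1 \ast A_1$ claim is essentially immediate once the coefficient $m+2$ is seen to be nonzero for all admissible $m$. In writing up simplicity, I would be careful to state precisely which products vanish (namely $L_i \circ L_{-1} = 0$ and the step factors $(j+1)$ that vanish only at $j = -1$) so that the index-raising and index-lowering moves are each justified, and I would present the extraction of a single basis vector from an arbitrary element of $I$ as the crux of the proof.
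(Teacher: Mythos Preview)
Your proposal is correct and in fact supplies far more detail than the paper, whose proof reads in its entirety: ``It is obvious that $(A_1,\circ)$ is a simple Novikov algebra and it also appears in [Os1]. In addition, it is easy to see that $A_1=A_1\ast A_1$.'' Your argument for $A_1 = A_1 \ast A_1$ via the nonvanishing coefficient $m+2 \geq 1$ is exactly the computation the paper leaves implicit.

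One small correction to your simplicity sketch: repeated left multiplication by $L_{-1}$ (sending $L_j \mapsto (j+1)L_{j-1}$) kills the \emph{lowest}-index term first and ultimately isolates the \emph{highest}-index term of $x$, not the lowest as you wrote. Alternatively, and perhaps more cleanly, you can observe that left multiplication by $L_0$ acts diagonally via $L_0 \circ L_j = (j+1)L_j$ with distinct eigenvalues $j+1$, so a Vandermonde argument immediately extracts each homogeneous component of $x$ into $I$. Either route then proceeds exactly as you describe: from one $L_k \in I$ the relations $L_{-1}\circ L_k = (k+1)L_{k-1}$ and $L_k \circ L_j = (j+1)L_{k+j}$ generate all basis vectors, with the only vanishing occurring at $j=-1$, which your index-climbing argument handles.
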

\begin{proof}
It is obvious that $(A_1,\circ)$ is a simple Novikov algebra and it also appears in \cite{Os1}.
In addition, it is easy to see that $A_1=A_1\ast A_1$.
\end{proof}

\begin{lemma}\label{ll3}(see Theorem 3.1 in \cite{OZ})
Any Lie algebra structure over $(A_1,\circ)$ is given as follows:
\begin{eqnarray}
[L_i,L_j]=c(i-j)L_{i+j}, \text{ for all $i$, $j\geq-1$,}
\end{eqnarray}
where $c\in \mathbb{C}$.
\end{lemma}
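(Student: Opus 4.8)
The plan is to take an arbitrary Lie bracket $[\cdot,\cdot]$ for which $(A_1,\circ,[\cdot,\cdot])$ is a Gel'fand-Dorfman bialgebra and to squeeze out its structure constants from the compatibility condition \eqref{eqq3}, reserving the Jacobi identity for the very end. First I would write $[L_i,L_j]=\sum_k c_{ij}^k L_k$ (a finite sum) and record that skew-symmetry gives $c_{ij}^k=-c_{ji}^k$. The crucial structural observation is that $(A_1,\circ)$ is graded by $\deg L_i=i$ and that $\circ$ is homogeneous of degree $0$, since $L_i\circ L_j=(j+1)L_{i+j}$. Because both skew-symmetry and the identity \eqref{eqq3} are \emph{linear} in the bracket, I may decompose $[\cdot,\cdot]=\sum_s[\cdot,\cdot]_s$ into its homogeneous components $[L_i,L_j]_s=f_s(i,j)L_{i+j+s}$, with each $f_s$ skew, and each component $[\cdot,\cdot]_s$ again satisfies \eqref{eqq3} together with $\circ$. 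This reduces the whole problem to one scalar functional equation for each integer $s$.

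Next I would substitute $a=L_i$, $b=L_j$, $c=L_k$ into \eqref{eqq3} and use $L_p\circ L_q=(q+1)L_{p+q}$ to obtain, for each $s$, the identity
\begin{equation*}
(j+1)f_s(i+j,k)-(k+1)f_s(i+k,j)+(k+1)f_s(i,j)-(j+1)f_s(i,k)-(j+k+s+1)f_s(j,k)=0.
\end{equation*}
Specializing $i=0$ expresses $f_s$ through the single-variable function $h(m)=f_s(0,m)$: one gets $(1-s)f_s(j,k)=(j+1)h(k)-(k+1)h(j)$. For $s\neq0,1$ the self-consistency $f_s(0,k)=h(k)$ already forces $h(m)=\tfrac{h(0)}{s}(m+1)$ and hence $f_s\equiv0$. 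For $s=0$ the same reduction gives $f_0(j,k)=(j+1)h(k)-(k+1)h(j)$, and feeding this back into the displayed equation produces a Cauchy-type relation $h(i+k)-h(k)=h(i+j)-h(j)$ for $j,k\geq0$, forcing $h$ to be affine on the nonnegative integers, whence $f_0(i,j)=c(i-j)$ up to a single residual parameter living at the boundary index $-1$ (the place where the factor $(j+1)(k+1)$ produced by the reduction degenerates). The exceptional degree $s=1$, where the coefficient $1-s$ vanishes, I would treat by the separate specialization $i=j=-1$, which yields $f_1(k,-1)=f_1(k-1,-1)$; with $f_1(-1,-1)=0$ this makes the whole $(\cdot,-1)$ slice vanish and, after a little further bookkeeping with the displayed equation, collapses the entire degree-$1$ component.

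Finally I would invoke the Jacobi identity to remove the leftover boundary freedom in degree $0$. Testing it on a triple that involves $L_{-1}$, for instance $(L_1,L_0,L_{-1})$, produces a term proportional to the square of the residual parameter and therefore forces it to vanish; what survives is exactly $[L_i,L_j]=c(i-j)L_{i+j}$. Since this is the Witt bracket, it is automatically a Lie bracket, so the Jacobi identity is genuinely satisfied and no further conditions appear, completing the classification.

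I expect the main obstacle to be precisely the two degenerate spots of the functional-equation analysis: the exceptional degree $s=1$, and the boundary index $-1$ in degree $0$ whose residual freedom is invisible to \eqref{eqq3} and can only be killed by the Jacobi identity. As a guide it is worth noting that setting $c=L_0$ in \eqref{eqq3}, together with $x\circ L_0=x$, shows $[\cdot,L_0]$ to be a derivation of $(A_1,\circ)$, whose derivation algebra is spanned by the grading operator $L_i\mapsto iL_i$ and left multiplication by $L_{-1}$; this both predicts the answer $[L_i,L_0]=c\,iL_i$ and flags the $L_{-1}$-direction as the source of the boundary ambiguity.
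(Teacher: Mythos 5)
Your proposal is sound, but it is worth noting up front that the paper does not actually prove Lemma~\ref{ll3}: it simply cites Theorem~3.1 of Osborn--Zelmanov \cite{OZ}. Your self-contained computation is therefore a genuinely different route, and I checked that it works. The graded decomposition is legitimate (both skew-symmetry and \eqref{eqq3} are homogeneous for the grading $\deg L_i=i$, so each component $f_s$ satisfies them separately), your displayed functional equation is the correct specialization of \eqref{eqq3}, and the reductions go through: for $s\neq 0,1$ skew-symmetry already gives $h(0)=f_s(0,0)=0$, so the self-consistency relation $-s\,h(k)=-(k+1)h(0)$ kills $h$ and hence $f_s$; for $s=1$ the slice $f_1(\cdot,-1)\equiv 0$ follows from $i=j=-1$ as you say, then $i=-1$, $j=0$ in the functional equation forces $f_1(0,\cdot)=0$, and an easy induction on $j$ using $(k+1)f_1(j,k-1)=(k+j+2)f_1(j,k)$ collapses the rest. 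The one inaccuracy is your predicted ``residual parameter at the boundary index $-1$'': it does not exist. The relation $h(i+k)-h(k)=h(i+j)-h(j)$ is valid for every $i\geq -1$ provided only that $j,k\geq 0$, so taking $i=-1$, $j=0$, $k=1$ already pins down $h(-1)=-h(1)$; hence the compatibility condition alone yields $f_0(j,k)=c(k-j)$ with no leftover freedom, and the Jacobi identity is needed only to confirm (trivially, since this is the Witt bracket) that the surviving one-parameter family is indeed Lie. What your route buys is a proof independent of \cite{OZ}; what it costs is the bookkeeping above, all of which does check out.
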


\begin{theorem}
Let $CL_1(c)=\oplus_{i\geq -1}^{+\infty}\mathbb{C}[\partial]L_i$ be a Lie conformal algebra with the $\lambda$-bracket:
\begin{eqnarray}
[{L_i}_\lambda L_j]=((i+1)\partial+(i+j+2)\lambda)L_{i+j}+c(j-i)L_{i+j},
\end{eqnarray}
for all $i$, $j\geq -1$ and $c\in \mathbb{C}$. Then, $CL_1(c)$ is a simple Lie conformal algebra for any
$c\in \mathbb{C}$.
\end{theorem}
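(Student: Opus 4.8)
The plan is to realize $CL_1(c)$ as the quadratic Lie conformal algebra attached to a concrete Gel'fand-Dorfman bialgebra and then to invoke Theorem \ref{p1}. First I would read off from the defining $\lambda$-bracket that $CL_1(c)$ is precisely the quadratic Lie conformal algebra corresponding to the triple $(A_1,\circ,[\cdot,\cdot])$, where $\circ$ is the simple Novikov product of Lemma \ref{ll2} and $[\cdot,\cdot]$ is the Lie bracket of Lemma \ref{ll3}. Concretely, substituting $a=L_i$, $b=L_j$ into the general formula (\ref{gf1}), namely $[a_\lambda b]=\partial(b\circ a)+[b,a]+\lambda(a\ast b)$, and using $L_j\circ L_i=(i+1)L_{i+j}$, $[L_j,L_i]=c(j-i)L_{i+j}$, and $L_i\ast L_j=(i+j+2)L_{i+j}$, reproduces the stated bracket. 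This identification is the only computation needed, and it is routine.

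Once the identification is in place, the hypotheses of Theorem \ref{p1} are supplied directly by the two preceding lemmas. Lemma \ref{ll2} guarantees both that $(A_1,\circ)$ is a simple Novikov algebra and that $A_1=A_1\ast A_1$ for the associated Novikov-Jordan product. Lemma \ref{ll3} certifies that $[L_i,L_j]=c(i-j)L_{i+j}$ is a genuine Lie algebra structure over $(A_1,\circ)$, so that $(A_1,\circ,[\cdot,\cdot])$ is indeed a Gel'fand-Dorfman bialgebra for every $c\in\mathbb{C}$. Thus Theorem \ref{p1} applies verbatim and yields the simplicity of $CL_1(c)$.

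Strictly speaking there is no genuine obstacle here: the substantive work has already been done in establishing Theorem \ref{p1} and in verifying both simplicity and the self-reproducing property $A_1=A_1\ast A_1$ in Lemma \ref{ll2}. The one point worth emphasizing is that simplicity holds uniformly in the parameter $c$; this reflects the observation following Theorem \ref{p1} that, under the hypothesis $V=V\ast V$ with $(V,\circ)$ simple, the simplicity of the conformal algebra is insensitive to the choice of compatible Lie bracket. Hence no case analysis in $c$ is required, and the result follows for all $c$ at once.
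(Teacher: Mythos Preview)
Your proposal is correct and follows exactly the same route as the paper: identify $CL_1(c)$ with the quadratic Lie conformal algebra of the Gel'fand-Dorfman bialgebra $(A_1,\circ,[\cdot,\cdot])$ and then invoke Theorem~\ref{p1} together with Lemmas~\ref{ll2} and~\ref{ll3}. The paper states the proof in a single sentence, and your expanded identification computation is the only detail added.
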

\begin{proof}
It can be directly obtained from Theorem \ref{p1}, Lemma \ref{ll2} and Lemma \ref{ll3}.
\end{proof}
\begin{remark}
By the definition of coefficient algebra of a Lie conformal algebra, it is easy to obtain that
$\text{Coeff}(CL_1(c))$ has a basis $\{L_{i,t}|i\geq -1,t\in \mathbb{Z}\}$ over $\mathbb{C}$ and
the Lie bracket is given by
\begin{eqnarray*}
[L_{i,t},L_{j,s}]=((j+1)t-(i+1)s)L_{i+j,t+s-1}+c(j-i)L_{i+j,t+s},
\end{eqnarray*}
for any $i$, $j\geq -1$ and $t$, $s\in \mathbb{Z}$.

When $c=0$, $CL_1(0)$ is introduced in \cite{SY1} which is the graded algebra of general conformal algebra $gc_1$.
\end{remark}

Next, let $\Delta$ be an additive subgroup of $\mathbb{C}$.
\begin{lemma}\label{ll4}
Assume $A_2$ has a basis $\{x_\alpha\}$ where $\alpha$ ranges over $\Delta$, and products are given by
\begin{eqnarray}\label{eqqq5}
x_\alpha \circ x_\beta=(\beta+b)x_{\alpha+\beta},\text{ for all $\alpha$, $\beta\in \Delta$.}
\end{eqnarray}
Then, $(A_2,\circ)$ is a simple Novikov algebra. Moreover, for the Novikov-Jordan algebra $(A_2,\ast)$ where $
\ast$ is given by
\begin{eqnarray}
x_\alpha \ast x_\beta=(\alpha+\beta+2b)x_{\alpha+\beta},\text{ for all $\alpha$, $\beta\in \Delta$,}
\end{eqnarray}
we obtain $A_2=A_2\ast A_2$ when $2b\notin \Delta$.
\end{lemma}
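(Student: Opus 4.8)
The plan is to verify the three distinct assertions in turn: that $(A_2,\circ)$ is a Novikov algebra, that it is simple, and that $A_2=A_2\ast A_2$ when $2b\notin\Delta$. For the first assertion I would simply substitute the defining product $x_\alpha\circ x_\beta=(\beta+b)x_{\alpha+\beta}$ into the two Novikov axioms. Both sides of each identity are scalar multiples of $x_{\alpha+\beta+\gamma}$, so the check reduces to comparing coefficients: for the left-symmetry axiom one compares $(\beta+b)(\gamma+b)$-type expressions, and for the right-commutativity axiom $(a\circ b)\circ c=(a\circ c)\circ b$ one verifies $(\beta+b)(\gamma+b)=(\gamma+b)(\beta+b)$, which is immediate. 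This is routine grading bookkeeping and I would not dwell on it.

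For simplicity, I would take a nonzero ideal $I\subset A_2$ and show $I=A_2$. Pick a nonzero element $P=\sum_{\alpha\in S}c_\alpha x_\alpha\in I$ with $S$ finite and all $c_\alpha\neq 0$. The key is that multiplication preserves the grading by $\Delta$ and that the structure constants $(\beta+b)$ can be controlled: multiplying $P$ on the right by a suitable $x_\gamma$ and comparing the components in different graded pieces, I expect to be able to isolate a single basis vector $x_{\alpha_0}\in I$. Concretely, right multiplication acts on each $x_\alpha$ by the scalar $(\gamma+b)$, which is independent of $\alpha$, whereas left multiplication $x_\gamma\circ x_\alpha$ scales by $(\alpha+b)$, which does depend on $\alpha$; playing these two actions against one another (e.g. taking differences of $x_\gamma\circ P$ for the varying coefficients $(\alpha+b)$) should let me reduce the support of an element of $I$ until one homogeneous $x_{\alpha_0}$ survives. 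Once a single $x_{\alpha_0}\in I$ is obtained, the relations $x_{\alpha_0}\circ x_\gamma=(\gamma+b)x_{\alpha_0+\gamma}$ and $x_\gamma\circ x_{\alpha_0}=(\alpha_0+b)x_{\gamma+\alpha_0}$ generate every $x_\delta$ (choosing $\gamma$ so that the scalar is nonzero, which is always possible since $\gamma+b=0$ fails for all but at most one $\gamma$), giving $I=A_2$.

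For the final claim $A_2=A_2\ast A_2$ under the hypothesis $2b\notin\Delta$, I would observe that $x_\alpha\ast x_\beta=(\alpha+\beta+2b)x_{\alpha+\beta}$, so the homogeneous component $x_\delta$ lies in $A_2\ast A_2$ provided some decomposition $\delta=\alpha+\beta$ yields a nonzero coefficient $\alpha+\beta+2b=\delta+2b$. Since this coefficient depends only on $\delta$, the vector $x_\delta$ is in $A_2\ast A_2$ exactly when $\delta+2b\neq 0$, i.e. when $\delta\neq -2b$. The hypothesis $2b\notin\Delta$ guarantees $-2b\notin\Delta$, so $\delta\neq -2b$ for every $\delta\in\Delta$, and hence every basis vector lies in $A_2\ast A_2$; this gives $A_2=A_2\ast A_2$. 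I expect the main obstacle to be the simplicity argument, specifically the reduction of a general nonzero element of an ideal down to a single homogeneous basis vector, where one must argue carefully that the linear combinations of left-multiplications separate the graded components despite the possible vanishing of an isolated structure constant $\gamma+b$.
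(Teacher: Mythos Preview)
Your argument is correct. The paper's own proof is much terser: it simply cites Osborn \cite{Os1} for the fact that $(A_2,\circ)$ is a simple Novikov algebra, and for the claim $A_2=A_2\ast A_2$ when $2b\notin\Delta$ it says only that this is ``easy to see.'' Your treatment of the last assertion---observing that $x_\delta\in A_2\ast A_2$ if and only if $\delta+2b\neq 0$, which always holds because $-2b\notin\Delta$---is exactly the computation the paper leaves implicit. The genuine difference lies in the simplicity argument: where the paper defers to the literature, you supply a self-contained proof by exploiting left multiplication by $x_0$ (which scales each $x_\alpha$ by $\alpha+b$) against scalar multiples of $P$ to strip graded components one at a time and isolate a single $x_{\alpha_0}$ in any nonzero ideal. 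This buys independence from the external reference at the cost of a short induction; the only delicate spot, which you correctly flag, is reaching the isolated index where a structure constant $\gamma+b$ or $\alpha_0+b$ vanishes, and that is handled by routing through an intermediate basis vector in two steps.
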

\begin{proof}
It has been shown in \cite{Os1} that $(A_2,\circ)$ is a simple Novikov algebra. In addition, it is easy to see that when $2b\notin \Delta$,
$A_2=A_2\ast A_2$.

\end{proof}
\begin{remark}\label{rrr1}
In fact, when $2b\in \Delta$, $(A_2,\ast)$ is not a simple Novkov-Jordan algebra. For example,
let $J$ have a basis $\{x_\alpha\}$, where $\alpha$ ranges all elements of $\Delta$ except $-2b$. Then, it is easy to check that $J$ is a non-trivial ideal of $(A_2,\ast)$.
\end{remark}
\begin{lemma}\label{ll5}(see Theorem 4.2 in \cite{X1}.)
Any Lie algebra structure over $(A_2,\circ)$  when $b\notin \Delta$ is of the form as follows:
\begin{eqnarray}
[x_\alpha,x_\beta]=\frac{1}{b}(\varphi(\beta)\alpha-\varphi(\alpha)\beta+b(\varphi(\beta)-\varphi(\alpha)))x_{\alpha+\beta}, \text{ $\alpha$, $\beta\in \Delta$,}
\end{eqnarray}
where $\varphi:\Delta\rightarrow \mathbb{C}^+$ is a group homomorphism.
\end{lemma}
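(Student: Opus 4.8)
The plan is to extract everything from the compatibility condition (\ref{eqq3}) by specializing one of its three arguments to the degree-zero generator $x_0$, which lies in $A_2$ because $0\in\Delta$. The point is that multiplication by $x_0$ is as simple as possible: right multiplication $y\mapsto y\circ x_0=by$ is just the scalar $b$, while left multiplication $L_{x_0}\colon y\mapsto x_0\circ y$ is semisimple with $L_{x_0}x_\delta=(\delta+b)x_\delta$, all eigenvalues being distinct. Since $b\notin\Delta$ we have $b\neq 0$, and, as will be seen, $b\notin\Delta$ is exactly what makes a certain shift of $L_{x_0}$ invertible; this is the mechanism that forces the bracket to be homogeneous.

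First I would set the third argument of (\ref{eqq3}) equal to $x_0$ and the first two equal to $x_\alpha,x_\beta$. Using $x_\alpha\circ x_0=b\,x_\alpha$ and $[x_\alpha,x_\beta]\circ x_0=b\,[x_\alpha,x_\beta]$, the two terms equal to $b\,[x_\alpha,x_\beta]$ cancel, so (\ref{eqq3}) collapses to
\begin{equation*}
(\beta+b)[x_{\alpha+\beta},x_0]=[x_\alpha,x_0]\circ x_\beta+x_\alpha\circ[x_\beta,x_0].
\end{equation*}
Taking $\alpha=0$ gives $(\beta+b)[x_\beta,x_0]=L_{x_0}[x_\beta,x_0]$, so $[x_\beta,x_0]$ is a $(\beta+b)$-eigenvector of $L_{x_0}$ and hence a multiple of $x_\beta$; write $[x_0,x_\beta]=\mu(\beta)x_\beta$. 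Feeding this back into the displayed identity for general $\alpha,\beta$ and comparing the coefficient of $x_{\alpha+\beta}$ yields $\mu(\alpha+\beta)=\mu(\alpha)+\mu(\beta)$, so $\mu\colon\Delta\to\mathbb{C}^+$ is a group homomorphism.

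Next I would set the first argument of (\ref{eqq3}) equal to $x_0$ and the other two equal to $x_\beta,x_\gamma$. Substituting $x_0\circ x_\beta=(\beta+b)x_\beta$, $[x_0,x_\beta]=\mu(\beta)x_\beta$, and the analogous terms, everything reduces to
\begin{equation*}
\bigl(L_{x_0}-(\beta+\gamma+2b)\bigr)[x_\beta,x_\gamma]=\bigl(\mu(\beta)(\gamma+b)-\mu(\gamma)(\beta+b)\bigr)x_{\beta+\gamma}.
\end{equation*}
Here $b\notin\Delta$ is decisive: the operator $L_{x_0}-(\beta+\gamma+2b)$ acts on $x_\delta$ as $\delta-\beta-\gamma-b$, which is nonzero for every $\delta\in\Delta$ (it would vanish only for $\delta=\beta+\gamma+b\notin\Delta$), hence it is invertible. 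Therefore $[x_\beta,x_\gamma]$ is the unique preimage of the right-hand side, namely a multiple of $x_{\beta+\gamma}$, and dividing by its eigenvalue $-b$ gives
\begin{equation*}
[x_\beta,x_\gamma]=\frac{1}{b}\bigl(\mu(\gamma)\beta-\mu(\beta)\gamma+b(\mu(\gamma)-\mu(\beta))\bigr)x_{\beta+\gamma},
\end{equation*}
which is exactly the asserted formula with $\varphi=\mu$.

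The step I expect to be the genuine obstacle is proving that the bracket is graded, i.e. that $[x_\beta,x_\gamma]\in\mathbb{C}x_{\beta+\gamma}$, since \emph{a priori} a Lie structure on $A_2$ could be wildly non-homogeneous. The computation above shows that homogeneity is forced automatically once one exploits the semisimplicity of $L_{x_0}$ together with $b\notin\Delta$. It is worth noting that only skew-symmetry and the compatibility (\ref{eqq3}) are used to pin down the form; the Jacobi identity plays no role in the derivation, although of course the resulting bracket must (and, by a direct check, does) satisfy it.
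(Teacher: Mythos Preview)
Your argument is correct. The paper does not actually prove this lemma; it is stated with a bare citation to Theorem~4.2 of \cite{X1}, so there is no in-paper proof to compare against. Your derivation is self-contained and clean: specializing the compatibility identity (\ref{eqq3}) first with $c=x_0$ to force $[x_0,x_\beta]\in\mathbb{C}x_\beta$ via the eigenspace decomposition of $L_{x_0}$, then with $a=x_0$ to solve for $[x_\beta,x_\gamma]$ using the invertibility of $L_{x_0}-(\beta+\gamma+2b)$, is exactly the right mechanism, and your observation that $b\notin\Delta$ is precisely what guarantees both $\beta+b\neq 0$ and the invertibility step is on point. The remark that Jacobi is never invoked is also accurate and worth keeping.
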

\begin{theorem}
Let $CL_2(b,\varphi)=\oplus_{\alpha\in \Delta} \mathbb{C}[\partial]x_\alpha$ be a Lie conformal algebra with the $\lambda$-bracket:
\begin{gather}
[{x_\alpha}_\lambda x_\beta]=((\alpha+b)\partial+(\alpha+\beta+2b)\lambda)x_{\alpha+\beta}\nonumber\\
+\frac{1}{b}(\varphi(\alpha)\beta-\varphi(\beta)\alpha+b(\varphi(\alpha)-\varphi(\beta)))x_{\alpha+\beta},
\end{gather}
for all $\alpha$, $\beta\in \Delta$ and $2b\notin \Delta$, $\varphi:\Delta\rightarrow \mathbb{C}^+$ is a group homomorphism. Then, $CL_2(b,\varphi)$ is a simple Lie conformal algebra.
\end{theorem}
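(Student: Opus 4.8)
The plan is to invoke the sufficient condition established in Theorem \ref{p1}, which says that if $(V,\circ)$ is a simple Novikov algebra and $V=V\ast V$, then the quadratic Lie conformal algebra corresponding to any Gel'fand-Dorfman bialgebra $(V,\circ,[\cdot,\cdot])$ is simple, regardless of the Lie bracket. The $\lambda$-bracket displayed for $CL_2(b,\varphi)$ is exactly of the form \eqref{gf1}: matching coefficients, the $\partial$-part $(\alpha+b)x_{\alpha+\beta}$ records $x_\beta\circ x_\alpha=(\alpha+b)x_{\alpha+\beta}$, the $\lambda$-part $(\alpha+\beta+2b)x_{\alpha+\beta}$ records $x_\alpha\ast x_\beta$, and the remaining term is the Lie bracket $[x_\beta,x_\alpha]$. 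So $CL_2(b,\varphi)$ is precisely the quadratic Lie conformal algebra attached to the Gel'fand-Dorfman bialgebra whose Novikov product is \eqref{eqqq5} and whose Lie bracket is the one from Lemma \ref{ll5}.

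First I would verify that the data $(A_2,\circ,[\cdot,\cdot])$ genuinely forms a Gel'fand-Dorfman bialgebra. The Novikov structure is guaranteed simple by Lemma \ref{ll4}, and Lemma \ref{ll5} asserts that the bracket of the stated form is a bona fide Lie algebra structure compatible with $(A_2,\circ)$; since Lemma \ref{ll5} is precisely the classification of Lie algebra structures over $(A_2,\circ)$ under the hypothesis $b\notin\Delta$, each such bracket satisfies the compatibility condition \eqref{eqq3} by construction. Here I should note the standing hypothesis: the theorem requires $2b\notin\Delta$, which in particular forces $b\notin\Delta$, so Lemma \ref{ll5} does apply. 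I would make this implication explicit, as it is the point that links the two lemmas.

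Next I would check the two hypotheses of Theorem \ref{p1}. Simplicity of $(A_2,\circ)$ is Lemma \ref{ll4}. The condition $A_2=A_2\ast A_2$ is also supplied by Lemma \ref{ll4}, but \emph{only} under the assumption $2b\notin\Delta$ — this is exactly why the theorem carries that hypothesis, and Remark \ref{rrr1} shows the condition can genuinely fail when $2b\in\Delta$. So the role of $2b\notin\Delta$ is twofold: it secures $V=V\ast V$, and it guarantees $b\notin\Delta$ so that the Lie bracket is one of those classified in Lemma \ref{ll5}. With both hypotheses of Theorem \ref{p1} in hand, simplicity of $CL_2(b,\varphi)$ follows immediately.

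I do not expect a genuine obstacle here, since the result is a direct specialization of the machinery already built. The only point requiring care is the bookkeeping in identifying $CL_2(b,\varphi)$ with the quadratic Lie conformal algebra of $(A_2,\circ,[\cdot,\cdot])$ via \eqref{gf1}: one must correctly match which argument sits on the left of $\circ$, since the $\partial$-coefficient in \eqref{gf1} is $b\circ a$ rather than $a\circ b$, and similarly the Lie-bracket term is $[b,a]$. Getting the index conventions $x_\alpha\circ x_\beta=(\beta+b)x_{\alpha+\beta}$ to line up with the coefficient $(\alpha+b)$ appearing in $[{x_\alpha}_\lambda x_\beta]$ is the one place a sign or index slip could creep in, but it is routine verification rather than a conceptual difficulty. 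The proof itself is therefore a one-line appeal to Theorem \ref{p1}, Lemma \ref{ll4} and Lemma \ref{ll5}.
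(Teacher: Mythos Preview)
Your proposal is correct and follows exactly the paper's approach: the paper's proof consists of the single observation that $2b\notin\Delta$ forces $b\notin\Delta$ (since $\Delta$ is an additive subgroup), and then appeals directly to Theorem \ref{p1}, Lemma \ref{ll4} and Lemma \ref{ll5}. Your write-up is simply a more explicit unpacking of the same argument, including the bookkeeping that identifies the $\lambda$-bracket with the form \eqref{gf1}.
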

\begin{proof}
Since $\Delta$ is an additive subgroup of $\mathbb{C}$, $2b\notin \Delta$ means $b\notin \Delta$. Then, this theorem can be directly obtained from Theorem \ref{p1}, Lemma \ref{ll4} and Lemma \ref{ll5}.
\end{proof}
\begin{remark}\label{rk1}
It is also easy to obtain that $\text{Coeff}(CL_2(b,\varphi))$ has a basis $\{L_{\alpha,i}|\alpha\in \Delta, i\in \mathbb{Z}\}$ and
the Lie bracket is given by
\begin{eqnarray*}
&&[L_{\alpha,i},L_{\beta,j}]=(i(\beta+b)-j(\alpha+b))x_{\alpha+\beta,i+j-1}\\
&&+\frac{1}{b}(\varphi(\alpha)\beta-\varphi(\beta)\alpha+b(\varphi(\alpha)-\varphi(\beta)))x_{\alpha+\beta,i+j}.
\end{eqnarray*}

When $\Delta=\mathbb{Z}$, $CL_2(b,0)$ is studied in \cite{FCH}.

Moreover, when $2b\in \Delta$, the corresponding Lie conformal algebra may not be simple. For example, according to Remark \ref{rrr1}, when $2b\in \Delta$, $CL_2(b,0)$ has an ideal $B=J\oplus \mathbb{C}[\partial]\partial A_2$ where the direct sum is the sum of vector spaces.

\end{remark}
\begin{lemma}\label{ll6}
Let $A_3$ be a vector space with a basis $\{x_{\alpha,j}|\alpha\in\Delta, j\in \mathbb{N}\}$. For any given constant $b\in \mathbb{C}$, define
an algebraic operation $\circ$ on $A_3$ by
\begin{eqnarray}
x_{\alpha,i}\circ x_{\beta,j}=(\beta+b)x_{\alpha+\beta,i+j}+jx_{\alpha+\beta,i+j-1}, \text{$\alpha$, $\beta\in \Delta$, $i$, $j\in \mathbb{N}$.}
\end{eqnarray}
Then, $(A_3,\circ)$ is a simple Novikov algebra. Moreover, for the Novikov-Jordan algebra $(A_3,\ast)$ where $\ast$ is given by
\begin{eqnarray}\label{eer}
x_{\alpha,i}\ast x_{\beta,j}=(\alpha+\beta+2b)x_{\alpha+\beta,i+j}+(i+j)x_{\alpha+\beta,i+j-1},
\end{eqnarray}
we obtain $A_3=A_3\ast A_3$.
\end{lemma}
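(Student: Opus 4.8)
Lemma \ref{ll6} (proposed proof plan).

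The plan is to prove two things: first that $(A_3,\circ)$ is a simple Novikov algebra, and second that $A_3 = A_3\ast A_3$ for the induced Novikov-Jordan product. For the first part, I would begin by verifying that $\circ$ is indeed a Novikov product, i.e.\ that it satisfies the left-symmetry identity $(a\circ b)\circ c - a\circ(b\circ c) = (b\circ a)\circ c - b\circ(a\circ c)$ and the right-commutativity identity $(a\circ b)\circ c = (a\circ c)\circ b$. This is a direct computation on the basis elements $x_{\alpha,i}$: each side expands into a sum of terms indexed by $x_{\alpha+\beta+\gamma, i+j+k-r}$ for $r\in\{0,1,2\}$, and one matches the coefficients coming from the factors $(\beta+b)$ and the shift contributions $j$. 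I expect the verification to be routine but slightly tedious because of the second term $jx_{\alpha+\beta,i+j-1}$ in the product, which produces cross terms when the product is iterated. It may be cleaner to recognize $A_3$ as a known construction (this algebra appears in the cited classifications \cite{X2,X3}, so I would cite the relevant theorem there rather than recompute).

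For simplicity of $(A_3,\circ)$, let $I$ be a nonzero ideal. I would take a nonzero element $u \in I$ and write it in the basis $\{x_{\alpha,j}\}$. The strategy is to use multiplication by suitable $x_{0,0}$ and $x_{0,1}$ (or more generally elements $x_{\gamma,k}$) to isolate a single basis vector inside $I$, then show that from one basis vector one generates all of $A_3$. Concretely, the operator $x_{0,1}\circ(-)$ and $(-)\circ x_{0,1}$ act by shifting the second index via the $j$-term, which gives a way to strip off the top degree in the $\mathbb{N}$-grading; iterating should land on some $x_{\alpha,0}$. Then left and right multiplication by $x_{\gamma,0}$ moves $\alpha$ around the group $\Delta$ while adjusting the $b$-weight $(\beta+b)$, and since $\Delta$ is a group one reaches every $x_{\alpha+\gamma,0}$ and, raising the index again, every $x_{\alpha,j}$. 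The main obstacle here is the bookkeeping: one must ensure the coefficients $(\beta+b)$ and the combinatorial factors from the shift terms do not all vanish simultaneously, so that the generated elements are genuinely nonzero; handling the degenerate cases (for instance when $b$ lies in $\Delta$ or when certain $\beta+b=0$) is where care is needed.

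For the second claim, I would simply read off from \eqref{eer} that
\begin{eqnarray*}
x_{\alpha,i}\ast x_{\beta,j}=(\alpha+\beta+2b)x_{\alpha+\beta,i+j}+(i+j)x_{\alpha+\beta,i+j-1},
\end{eqnarray*}
and argue that every basis vector $x_{\gamma,k}$ lies in $A_3\ast A_3$. Unlike the situation for $A_2$ in Lemma \ref{ll4}, there is no obstruction from a vanishing coefficient $\gamma+2b$, because the second term supplies an extra $(i+j)$-weighted contribution in adjacent degree. The cleanest approach is an induction on the second index $k$: to produce $x_{\gamma,k}$, choose $x_{\gamma,k}\ast x_{0,0}=(\gamma+2b)x_{\gamma,k}+k\,x_{\gamma,k-1}$ when $\gamma+2b\neq 0$, and solve for $x_{\gamma,k}$ using the inductive hypothesis on $x_{\gamma,k-1}$; when $\gamma+2b=0$, instead use a product such as $x_{\gamma,k+1}\ast x_{0,0}=k\,x_{\gamma,k}$ (after shifting) together with $x_{\gamma',0}\ast x_{\gamma'',0}$ to reach the exceptional weight. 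Since this step is a direct solvability argument over $\mathbb{C}$, I expect it to be straightforward once the base case $k=0$ is handled, and it is the first part (establishing simplicity of the Novikov algebra with its shift term) that constitutes the real work of the lemma.
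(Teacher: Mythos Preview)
Your plan is essentially correct and aligns with the paper's own proof. The paper simply cites Theorem~2.9 in \cite{X2} for the fact that $(A_3,\circ)$ is a simple Novikov algebra, which matches the shortcut you yourself mention; your sketch of a direct simplicity argument is reasonable but unnecessary here. For the claim $A_3=A_3\ast A_3$, the paper does exactly what you propose: it inducts on the second index by taking products with $x_{0,0}$, first handling $\gamma\neq -2b$ (where the coefficient $\gamma+2b$ is invertible) and then observing that when $\alpha+\beta=-2b$ the product $x_{\alpha,i}\ast x_{\beta,j}=(i+j)x_{-2b,i+j-1}$ still hits every $x_{-2b,m}$.

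One small correction: in your exceptional case you write $x_{\gamma,k+1}\ast x_{0,0}=k\,x_{\gamma,k}$, but the coefficient is $(k+1)$, not $k$. This matters, because with $k$ in place of $k+1$ the base case $x_{-2b,0}$ would be missed; with the correct coefficient $(k+1)\neq 0$ for all $k\geq 0$ the argument goes through.
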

\begin{proof}
The result that $(A_3,\circ)$ is a simple Novikov algebra can be referred to Theorem 2.9 in \cite{X2}.
Next, we prove that $A_3=A_3\ast A_3$.

For any $\alpha$, $\beta\in \Delta$ and $\alpha+\beta\neq -2b$, setting $i=0$, $j=0$ in (\ref{eer}), we obtain $x_{\alpha+\beta,0}\in A_3\ast A_3$. Then, letting $i=1$ and $j=0$ and according to $x_{\alpha+\beta,0}\in A_3\ast A_3$,
we obtain $x_{\alpha+\beta,1}\in A_3\ast A_3$ when $\alpha+\beta\neq -2b$. Similarly, we can get $x_{\alpha+\beta,i}\in A_3\ast A_3$ for all
$i\in \mathbb{N}$ when $\alpha+\beta\neq -2b$. If $\alpha+\beta=-2b$, according to (\ref{eer}), we obtain
$x_{-2b,i}\in A_3\ast A_3$ for all
$i\in \mathbb{N}$. Therefore, $A_3=A_3\ast A_3$.

\end{proof}
\begin{remark}
It should be pointed out that the infinite-dimensional Novikov algebra $A$ in \cite{Os1} with a basis $\{L_{\alpha,j}|\alpha\in\Delta, j\in \mathbb{N}\}$ and the products given by
\begin{eqnarray*}
L_{\alpha,i}\circ L_{\beta,j}=(\beta+b)\left(
                                         \begin{array}{c}
                                           i+j \\
                                           i\\
                                         \end{array}
                                       \right)
L_{\alpha+\beta,i+j}+\left(\begin{array}{c}
                         i+j-1 \\
                         i \\
                       \end{array}
                     \right)
L_{\alpha+\beta,i+j-1}.
\end{eqnarray*}  is isomorphic to $A_3$. The isomorphic algebra morphism $\psi$ from $A$ to $A_3$ is as follows:
$\psi(i!L_{\alpha,i})\rightarrow x_{\alpha,i}$, for all $\alpha \in \Delta$ and $i\in \mathbb{N}$.
\end{remark}

\begin{lemma}\label{ll7}(see Theorem 5.1 in \cite{X1})
Any Lie algebra over $A_3$ with $b\notin \Delta$ is of the following form:
\begin{gather}\label{ert}
[x_{\alpha,i},x_{\beta,j}]=\frac{1}{b}((\alpha+b)\varphi(\beta)-(\beta+b)\varphi(\alpha))x_{\alpha+\beta,i+j}\\
+\frac{1}{b}[i(\varphi(\beta)-c(\beta+b))+j(c(\alpha+b)-\varphi(\alpha))]x_{\alpha+\beta,i+j-1}.\nonumber
\end{gather}
for $\alpha$, $\beta\in \Delta$, $i$, $j\in \mathbb{N}$, where $\varphi:\Delta\rightarrow \mathbb{C}^+$ is a group homomorphism and
$c \in \mathbb{C}$ is a constant.

\end{lemma}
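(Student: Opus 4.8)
The plan is to classify all brackets $[\cdot,\cdot]$ for which $(A_3,\circ,[\cdot,\cdot])$ is a Gel'fand--Dorfman bialgebra, i.e. all Lie brackets on $A_3$ satisfying the compatibility condition \eqref{eqq3} with the Novikov product $\circ$ of Lemma \ref{ll6}. Everything is organized around the distinguished element $h:=x_{0,0}$. From the product formula one reads off that right multiplication by $h$ is scalar, $y\circ h=b\,y$ for all $y\in A_3$ (here $b\neq 0$, since $b\notin\Delta$ and $0\in\Delta$), while left multiplication $L_h\colon y\mapsto h\circ y$ acts on the homogeneous subspace $A_3^{\beta}:=\bigoplus_{m\in\mathbb{N}}\mathbb{C}x_{\beta,m}$ as $(\beta+b)\,\mathrm{id}$ plus a level-lowering (locally nilpotent) part; thus its generalized eigenvalue on $A_3^{\beta}$ is $\beta+b$. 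Because $b\notin\Delta$, these numbers are pairwise distinct and nonzero across $\beta\in\Delta$, so the $\Delta$-grading is \emph{exactly} the generalized-eigenspace decomposition of $L_h$.

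The first step is to show the bracket respects the $\Delta$-grading. Substituting $b=h$ into \eqref{eqq3} and using $y\circ h=b\,y$, the two terms proportional to $b\,[x_{\alpha,i},x_{\gamma,k}]$ cancel, and the identity collapses to
\[
[x_{\alpha,i}\circ x_{\gamma,k},\,h]=[x_{\alpha,i},h]\circ x_{\gamma,k}+x_{\alpha,i}\circ[x_{\gamma,k},h],
\]
so the operator $D\colon y\mapsto[y,h]$ is a derivation of the Novikov algebra $(A_3,\circ)$. Since $D(h)=[h,h]=0$, the derivation property gives $D\circ L_h=L_h\circ D$, whence $D$ commutes with $L_h$ and therefore preserves each generalized eigenspace $A_3^{\beta}$; that is, $[x_{\beta,j},h]\in A_3^{\beta}$. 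A further application of \eqref{eqq3} with $h$ in the first slot, again comparing generalized $L_h$-eigenvalues (which are separated precisely because $b\notin\Delta$), promotes this to full homogeneity: $[x_{\alpha,i},x_{\beta,j}]\in A_3^{\alpha+\beta}$ for all $\alpha,\beta,i,j$.

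Next I would reduce to a functional-equation problem. Writing $[x_{\alpha,i},x_{\beta,j}]=\sum_{m}\phi_m\,x_{\alpha+\beta,m}$ and feeding this into \eqref{eqq3}, the two-term shape of $\circ$ (a leading level $i+j$ together with a single subleading level $i+j-1$), combined with the invertibility of all factors $\beta+b$, first truncates the support to $m\in\{i+j,\,i+j-1\}$ and then yields recursions for the two surviving coefficients. The leading coefficient is forced to be independent of $i,j$ with $\Delta$-dependence $f(\alpha,\beta)=\tfrac1b\big((\alpha+b)\varphi(\beta)-(\beta+b)\varphi(\alpha)\big)$, where additivity of $\varphi\colon\Delta\to\mathbb{C}^{+}$ is exactly the constraint imposed by the compatibility condition; the subleading coefficient is affine in $i,j$ and introduces one further free scalar $c\in\mathbb{C}$, producing the second line of \eqref{ert}. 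Antisymmetry is built into this shape, and one checks the Jacobi identity holds for every pair $(\varphi,c)$; the subfamily of Proposition \ref{pp2} is recovered at $\varphi(\alpha)=k\alpha$, $c=k$, which serves as a consistency check.

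The main obstacle is the functional-equation analysis of the last step. Because $\circ$ is only \emph{filtered}, not graded, in the level index $\mathbb{N}$, the identity \eqref{eqq3} does not split cleanly level by level; one must track the leading and subleading coefficients simultaneously and exploit $\beta+b\neq 0$ to disentangle them. Establishing that the $\Delta$-dependence of the leading coefficient is precisely that of a group homomorphism $\varphi$—rather than a more general two-variable cochain—is the key rigidity, and is where the full strength of the compatibility condition must be used.
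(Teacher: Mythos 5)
The paper offers no proof of this lemma at all --- it is quoted verbatim from Theorem 5.1 of \cite{X1} --- so there is no in-paper argument to measure yours against; it must be judged on its own terms. The first half of your proposal is sound: $y\circ x_{0,0}=b\,y$ does follow from the product formula, substituting $x_{0,0}$ into the middle slot of the compatibility condition \eqref{eqq3} does show that $D\colon y\mapsto [y,x_{0,0}]$ is a derivation of $(A_3,\circ)$ commuting with $L_{x_{0,0}}$, and the separation of the generalized eigenvalues $\beta+b$ (which uses $b\notin\Delta$ exactly where you say it does) correctly forces $[x_{\alpha,i},x_{\beta,j}]\in A_3^{\alpha+\beta}$. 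Your consistency checks (skew-symmetry of \eqref{ert}, recovery of the bracket of Proposition \ref{pp2} at $\varphi(\alpha)=k\alpha$, $c=k$) are also correct.

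There is, however, a genuine gap at the decisive step, and you have located it yourself without closing it. Homogeneity in the $\Delta$-grading only leaves $[x_{\alpha,i},x_{\beta,j}]=\sum_{m\in\mathbb{N}}\phi^{\alpha,\beta,i,j}_{m}x_{\alpha+\beta,m}$ with the entire level structure undetermined. The assertions that the support truncates to $m\in\{i+j,\,i+j-1\}$, that the leading coefficient is independent of $i,j$ and equals $\tfrac1b\bigl((\alpha+b)\varphi(\beta)-(\beta+b)\varphi(\alpha)\bigr)$ with $\varphi$ \emph{additive}, and that the subleading coefficient is affine in $i,j$ with a single free constant $c$, are precisely the content of the theorem; none of them follows from the ``two-term shape of $\circ$'' without an actual induction on levels. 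A workable route would be to first pin down $D$ on each $A_3^{\beta}$: since $D$ commutes with the locally nilpotent lowering operator $N=L_{x_{0,0}}-(\beta+b)\,\mathrm{id}$, it is a power series in $N$ there, and the derivation identity $D(x\circ y)=D(x)\circ y+x\circ D(y)$ must then be used to kill all terms $N^{k}$ with $k\ge 2$, to force the constant term to be an additive function $-\varphi(\beta)$, and to make the coefficient $-c$ of $N$ independent of $\beta$; only after that can the general bracket be propagated from $[\,\cdot\,,x_{0,0}]$ via \eqref{eqq3}. As written, your argument assumes the answer in order to derive it; to make this a proof you must either carry out that computation or cite \cite{X1} as the paper does.
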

\begin{remark}\label{rr1}
In addition,  Xu in \cite{X1} also presented several families of Lie algebras over $(A_3,\circ)$ when $b\in \Delta$.
For example, when $0\neq b\in \Delta$, the Lie algebra structure defined by (\ref{ert}) is also over $(A_3,\circ)$.
When $b=0$, three kinds of Lie algebra structures over $(A_3,\circ)$ are given in Remark 5.2 in \cite{X1}. For example, there is a Lie algebra structure over $(A_3,\circ)$ defined as follows:
\begin{eqnarray}
[x_{\alpha,i},x_{\beta,j}]=\phi(\alpha,\beta)x_{\alpha+\beta,i+j}+(i\varphi(\beta)-j\varphi(\alpha))x_{\alpha+\beta,i+j-1},
\end{eqnarray}
for $\alpha$, $\beta\in \Delta$, $i$, $j\in \mathbb{N}$, where $\varphi:\Delta\rightarrow \mathbb{C}^+$ is a group homomorphism and
$\phi(\cdot,\cdot):\Delta \times \Delta\rightarrow \mathbb{C}$ is a skew-symmetric $\mathbb{Z}$-bilinear form.

\end{remark}
\begin{theorem}
Let $CL_3(b,\varphi,c)=\oplus_{\alpha\in \Delta,i\in \mathbb{N}} \mathbb{C}[\partial]x_{\alpha,i}$ be a Lie conformal algebra with the $\lambda$-bracket:
\begin{gather}
[{x_{\alpha,i}}_\lambda x_{\beta,j}]=\partial((\alpha+b)x_{\alpha+\beta,i+j}+ix_{\alpha+\beta,i+j-1})\nonumber\\
+\lambda((\alpha+\beta+2b)x_{\alpha+\beta,i+j}+(i+j)x_{\alpha+\beta,i+j-1})\nonumber\\
+\frac{1}{b}((\beta+b)\varphi(\alpha)-(\alpha+b)\varphi(\beta))x_{\alpha+\beta,i+j}\nonumber\\
+\frac{1}{b}[j(\varphi(\alpha)-c(\alpha+b))+i(c(\beta+b)-\varphi(\beta))]x_{\alpha+\beta,i+j-1},
\end{gather}
for all $\alpha$, $\beta\in \Delta$, $i$, $j\in \mathbb{N}$, $c \in \mathbb{C}$ is a constant and $b\neq 0$, $\varphi:\Delta\rightarrow \mathbb{C}^+$ is a group homomorphism. Then, $CL_3(b,\varphi,c)$ is a simple Lie conformal algebra.
\end{theorem}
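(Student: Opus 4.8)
The plan is to reuse the strategy established right after Theorem \ref{p1} and already applied to $CL_1(c)$ and $CL_2(b,\varphi)$: identify $CL_3(b,\varphi,c)$ as the quadratic Lie conformal algebra attached to an explicit Gel'fand-Dorfman bialgebra on $A_3$, verify the hypotheses of Theorem \ref{p1} for that bialgebra, and conclude.

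First I would match the displayed $\lambda$-bracket against the canonical form (\ref{gf1}), namely $[a_\lambda b]=\partial(b\circ a)+[b,a]+\lambda(a\ast b)$. With $a=x_{\alpha,i}$ and $b=x_{\beta,j}$, the $\partial$-coefficient $(\alpha+b)x_{\alpha+\beta,i+j}+ix_{\alpha+\beta,i+j-1}$ is exactly $x_{\beta,j}\circ x_{\alpha,i}$ for the Novikov product of Lemma \ref{ll6}; the $\lambda$-coefficient $(\alpha+\beta+2b)x_{\alpha+\beta,i+j}+(i+j)x_{\alpha+\beta,i+j-1}$ is exactly $x_{\alpha,i}\ast x_{\beta,j}$ (which, being symmetric, also equals $x_{\beta,j}\ast x_{\alpha,i}$); and the constant term is precisely $[x_{\beta,j},x_{\alpha,i}]$ read off from (\ref{ert}) under the substitution $\alpha\leftrightarrow\beta$, $i\leftrightarrow j$. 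This identifies $CL_3(b,\varphi,c)$ as the quadratic Lie conformal algebra corresponding to the Gel'fand-Dorfman bialgebra $(A_3,\circ,[\cdot,\cdot])$ with $\circ$ as in Lemma \ref{ll6} and $[\cdot,\cdot]$ as in (\ref{ert}).

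Next I would check the two hypotheses of Theorem \ref{p1}. By Lemma \ref{ll6}, $(A_3,\circ)$ is a simple Novikov algebra and $A_3=A_3\ast A_3$, the latter holding for every value of $b$. It remains to confirm that (\ref{ert}) genuinely defines a Lie algebra structure compatible with $\circ$, so that $(A_3,\circ,[\cdot,\cdot])$ is a Gel'fand-Dorfman bialgebra; this simultaneously justifies, via the equivalent characterization of quadratic Lie conformal algebras, that $CL_3(b,\varphi,c)$ is a bona fide Lie conformal algebra. For $b\notin\Delta$ this compatibility is Lemma \ref{ll7}; for $0\neq b\in\Delta$ it is supplied by Remark \ref{rr1}. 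Together these cover all $b\neq 0$, which explains the hypothesis $b\neq 0$ (it also renders the factor $\tfrac{1}{b}$ meaningful). With both hypotheses verified, Theorem \ref{p1} immediately gives that $CL_3(b,\varphi,c)$ is simple.

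I expect the only delicate point to be the bookkeeping in applying (\ref{gf1}): since that formula reverses the two arguments in its $\partial$- and constant-terms, one must read off the swapped products $x_{\beta,j}\circ x_{\alpha,i}$ and $[x_{\beta,j},x_{\alpha,i}]$ and check that the index shifts ($i+j$ versus $i+j-1$) and the coefficients ($(\alpha+b)$, $\varphi(\alpha)$, $c(\beta+b)$, and so on) align after the swap. The one genuinely structural subtlety, the passage from $b\notin\Delta$ to $0\neq b\in\Delta$, I would handle purely by citing Remark \ref{rr1} rather than re-verifying the Jacobi and compatibility identities by hand.
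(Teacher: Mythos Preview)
Your proposal is correct and follows exactly the paper's approach: the paper's proof is the single line ``It can be directly obtained from Theorem \ref{p1}, Lemma \ref{ll6}, Lemma \ref{ll7} and Remark \ref{rr1},'' and you have simply unpacked what each citation contributes. Your additional bookkeeping (matching the displayed $\lambda$-bracket against (\ref{gf1}) with the argument swap) is a welcome elaboration, but not something the paper spells out.
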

\begin{proof}
It can be directly obtained from Theorem \ref{p1}, Lemma \ref{ll6}, Lemma \ref{ll7} and Remark \ref{rr1}.
\end{proof}
\begin{remark}
According to the definition of coefficient algebra of a Lie conformal algebra, it is easy to see that $\text{Coeff}(CL_3(b,\varphi,c))$ has a
basis $\{x_{\alpha,i,s}|\alpha\in \Delta,s\in \mathbb{N},s\in \mathbb{Z}\}$ and the Lie bracket is given by
\begin{eqnarray*}
[x_{\alpha,i,s},x_{\beta,j,t}]&=&(s(\beta+b)-t(\alpha+b))x_{\alpha+\beta,i+j,t+s-1}+(js-it)x_{\alpha+\beta,i+j-1,t+s-1}\\
&&+\frac{1}{b}((\beta+b)\varphi(\alpha)-(\alpha+b)\varphi(\beta))x_{\alpha+\beta,i+j,t+s}\\
&&+\frac{1}{b}[j(\varphi(\alpha)-c(\alpha+b))+i(c(\beta+b)-\varphi(\beta))]x_{\alpha+\beta,i+j-1,t+s},
\end{eqnarray*}
for any $\alpha$, $\beta\in \Delta$, $i$, $j\in \mathbb{N}$, $b\neq 0$ and $s$, $t\in \mathbb{Z}$.

In fact, according to Remark \ref{rr1}, there are many other Lie algebra structures over $(A_3,\circ)$ when $b\in \Delta$. Thus, following our method, there are many other infinite simple Lie conformal algebras. For example, when
$b=0$, by Remark \ref{rr1}, Lemma \ref{ll6} and Theorem \ref{p1}, there is a simple Lie conformal algebra $CL_3(\phi,\varphi)=\oplus_{\alpha\in \Delta,i\in \mathbb{N}} \mathbb{C}[\partial]x_{\alpha,i}$ with the $\lambda$-bracket as follows:
\begin{eqnarray*}
[{x_{\alpha,i}}_\lambda x_{\beta,j}]&&=\partial(\alpha x_{\alpha+\beta,i+j}+ix_{\alpha+\beta,i+j-1})\\
&&+\lambda((\alpha+\beta)x_{\alpha+\beta,i+j}+(i+j)x_{\alpha+\beta,i+j-1})\\
&&+\phi(\beta,\alpha)x_{\alpha+\beta,i+j}+(j\varphi(\alpha)-i\varphi(\beta))x_{\alpha+\beta,i+j-1},
\end{eqnarray*}
for all $\alpha$, $\beta\in \Delta$, $i$, $j\in \mathbb{N}$, where $\varphi:\Delta\rightarrow \mathbb{C}^+$ is a group homomorphism and
$\phi(\cdot,\cdot):\Delta \times \Delta\rightarrow \mathbb{C}$ is a skew-symmetric $\mathbb{Z}$-bilinear form.
\end{remark}

\end{document}